\newtheorem{theorem}{Theorem}[section]
\newtheorem{lemma}[theorem]{Lemma}
\newtheorem{prop}[theorem]{Proposition}
\newtheorem{corollary}[theorem]{Corollary}
\newtheorem{definition}[theorem]{Definition}
\newtheorem{assumption}[theorem]{Assumption}
\newtheorem{rmk}[theorem]{Remark}
\theoremstyle{definition}
\numberwithin{equation}{section}
\newcommand{\be}{\begin{equation}}
\newcommand{\ee}{\end{equation}}
\newcommand{\de}{\begin{align*}}
\newcommand{\fe}{\end{align*}}
\newcommand{\Z}{\mathbb{Z}}
\newcommand{\mmin}{\mathfrak{min}}
\newcommand{\prods}{\sideset{}{^*} \prod}
\newcommand{\blue}[1]{\textcolor{blue}{#1}}
\newcommand{\LL}{\mathsf{L}}
\newcommand{\e}{\epsilon}
\newcommand{\mfs}{\mathfrak{s}}
\newcommand{\mfi}{\mathfrak{i}}
\newcommand{\mfj}{\mathfrak{j}}
\newcommand{\mfk}{\mathfrak{k}}
\newcommand{\mfl}{\mathfrak{l}}
\newcommand{\bl}{\mathbf{l}}
\newcommand{\bk}{\mathbf{k}}
\newcommand{\bi}{\mathbf{i}}
\newcommand{\bj}{\mathbf{j}}
\definecolor{mintbg}{rgb}{.63,.79,.95}
\colorlet{lightmintbg}{mintbg!40}
\begin{document}
	
	\title{Strong law of large numbers for the stochastic six vertex model}
	\author{Hindy Drillick and Yier Lin}
	\address[Hindy Drillick]{\ \hspace{50pt}Department of Mathematics, Columbia University}
	\email{hindy.drillick@columbia.edu}
	\address[Yier Lin]{\ \hspace{72pt}Department of Statistics, University of Chicago}
	\email{ylin10@uchicago.edu}
	\begin{abstract}
 We consider the inhomogeneous stochastic six vertex model with periodicity starting from step initial data. We prove that it converges almost surely to a deterministic limit shape. For the proof, we map the stochastic six vertex model to a deformed version of the discrete Hammersley process \cite{seppalainen1997increasing, basdevant2015discrete}. Then we construct a colored version of the model and apply Liggett’s superadditive ergodic theorem. The construction of the colored model includes a new idea using a Boolean-type product, which generalizes and simplifies the method used in \cite{drillick2022hydrodynamics}.
	\end{abstract}
	\maketitle

	\section{Introduction}
	\subsection{The S6V model}
	\label{sec:s6v}
	The stochastic six vertex (S6V) model \cite{gwa1992six} is a classical model in two-dimensional statistical physics. It is a stochastic version of the six vertex model \cite{lieb1967residual, baxter2016exactly}, which is a natural model for crystal lattices with hydrogen bonds. We associate six possible configurations to each lattice point on the first quadrant, as in Figure \ref{fig:s6v}. We view the lines from the south and the west as \emph{inputs} and the lines to the north and the east as \emph{outputs}. The model is \emph{stochastic} in the sense that when we fix the inputs, the weights of possible
	configurations with those inputs sum up to 1. The configurations chosen for neighboring vertices need to be compatible so that the output lines of a vertex are the input lines of the vertices to its immediate north and east.

	By \cite{borodin2016stochastic}, we know that the S6V model belongs to the Kardar-Parisi-Zhang (KPZ) universality class---a class of models that exhibit universal statistical behavior in their large time/large-scale limits, see \cite{kardar1986dynamic, quastel2011introduction, corwin2012kardar} for an overview of this topic. Some recent works on the S6V model include \cite{aggarwal2017convergence, aggarwal2018current, borodin2019stochastic, lin2019markov, shen2019stochastic, aggarwal2020limit, corwin2020stochastic, dimitrov2020two, kuan2021short, lin2022classification} and references therein.

	In this paper, we view the S6V model as a stochastic path ensemble on the first quadrant with the boundary condition that there is a line entering each of the vertices in $\{1\} \times \mathbb{Z}_{\geq 1}$ from the left, and no lines entering $\mathbb{Z}_{\geq 1} \times \{1\}$ from the bottom. This boundary data is called \textit{step initial data.} Fix a collection of parameters $\{b_1(i, j), b_2 (i, j)\}_{i, j}$ that take values in $[0, 1]$. Starting from $(1, 1)$, we tile each vertex with one of the six vertex configurations in Figure \ref{fig:s6v}, where we only consider configurations whose input lines match the input lines of the given vertex. We assign an allowed configuration with probability given by the weight of the configuration. This tiling construction then progresses sequentially in the linear order $(2, 1), (1, 2), (3, 1), (2, 2), (1, 3), \dots$ to the entire quadrant. Note that we can view the stochastic path ensemble as a family of upright paths ordered in the northwest direction, as depicted in the left panel of Figure \ref{fig:sampling}. 
	
	Let $h$ be the height function of the S6V model defined on $\mathbb{Z}_{\geq 0}^2$ where $h(x, y)$ records the number of lines that pass through or to the right of $(x, y)$, see the left panel of Figure \ref{fig:sampling}. 
	\begin{figure}[ht]
		\centering
		\begin{tabular}{|c|c|c|c|c|c|c|}
			\hline
			Type & I & II & III & IV & V & VI \\
			\hline
			\begin{tikzpicture}[scale = 1.5]
			\draw[fill][white] (0.5, 0) circle (0.05);
			\draw[thick][white] (0, 0) -- (1,0);
			\draw[thick][white] (0.5, -0.5) -- 
			(0.5,0.5);
			\node at (0.5, 0) {Configuration};
			\end{tikzpicture}
			&
			\begin{tikzpicture}[scale = 1.2]
			\draw[fill] (0.5, 0) circle (0.05);
			\draw[thick] (0, 0) -- (1,0);
			\draw[thick] (0.5, -0.5) -- (0.5,0.5);
			\end{tikzpicture}
			&
			\begin{tikzpicture}[scale = 1.2]
			\draw[thick][white] (0, 0) -- (1,0);
			\draw[thick][white] (0.5, -0.5) -- (0.5,0.5);
			\draw[fill] (0.5, 0) circle (0.05);
			\end{tikzpicture}
			&
			\begin{tikzpicture}[scale = 1.2]
			\draw[thick][white] (0, 0) -- (1,0);
			\draw[thick] (0.5, -0.5) -- (0.5,0.5);
			\draw[fill] (0.5, 0) circle (0.05);
			\end{tikzpicture}
			&
			\begin{tikzpicture}[scale = 1.2]
			\draw[thick][white] (0, 0) -- (0.5,0);
			\draw[thick][white] (0.5, 0) -- (0.5, 0.5);
			\draw[thick] (0.5, 0) -- (1, 0);
			\draw[thick] (0.5, -0.5) -- (0.5, 0);
			(0.5,0.5);
			\draw[fill] (0.5, 0) circle (0.05);
			\end{tikzpicture}
			&
			\begin{tikzpicture}[scale = 1.2]
			\draw[thick] (0, 0) -- (1,0);
			\draw[thick][white] (0.5, -0.5) -- (0.5,0.5);
			\draw[fill] (0.5, 0) circle (0.05);
			\end{tikzpicture}
			&
			\begin{tikzpicture}[scale = 1.2]
			\draw[thick] (0, 0) -- (0.5,0);
			\draw[thick] (0.5, 0) -- (0.5, 0.5);
			\draw[thick][white] (0.5, 0) -- (1, 0);
			\draw[thick][white] (0.5, -0.5) -- (0.5, 0);
			(0.5,0.5);
			\draw[fill] (0.5, 0) circle (0.05);
			\end{tikzpicture}
			\\
			\hline
			Weight 
			& 1 & 1 & $b_1 (i, j)$ & $1- b_1 (i, j)$ & $b_2 (i, j)$ & $1-b_2 (i, j)$\\
			\hline
		\end{tabular}
		\caption{Six types of configurations for the S6V model}
		\label{fig:s6v}
	\end{figure}
	\begin{assumption} \label{assumption}
		There exist periodicities $I, J \in \mathbb{Z}_{\geq 1}$ such that $b_k (x+I, y) = b_k (x, y)$ and $b_k (x, y+J) = b_k (x, y)$ for arbitrary $x, y$ and $k \in \{1, 2\}$.
	\end{assumption}
	\begin{theorem}\label{thm:main}
		There exists a Lipschitz function $g$ such that with probability $1$,
		\begin{equation}\label{eq:asconverge}
		\lim_{n \to \infty} \frac{h(\lfloor nx \rfloor, \lfloor ny\rfloor)}{n} = g(x, y), \qquad \forall x, y \in \mathbb{R}_{\geq 0}.
		\end{equation}
	\end{theorem}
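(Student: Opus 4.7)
The plan is to apply Liggett's superadditive ergodic theorem to a colored extension of the S6V model, after first recasting the height function as a functional of a deformed discrete Hammersley process. Following \cite{seppalainen1997increasing, basdevant2015discrete}, I would begin by identifying the S6V line ensemble with step initial data as a trajectory configuration of a discrete Hammersley-type process whose local rule is deformed by the inhomogeneous vertex weights $b_k(i,j)$. Under this identification $h(x,y)$ becomes a last-passage-style observable counting trajectories that cross a given region, a more convenient object for the coupling arguments to come.

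Next I would construct a multicolor extension of the model. In ordinary last-passage percolation superadditivity is immediate from path concatenation, but in S6V the sequential sampling at each vertex couples all paths through shared randomness, so the uncolored height does not admit such an inequality directly. To get around this, introduce finitely many colors, each equipped with its own shifted step boundary, and define the joint dynamics at a vertex by a Boolean-type product rule that is commutative and associative in the colors, preserves the correct single-color marginal (so each color family is a genuine copy of S6V with its own initial data), and ensures that the combined height dominates the sum of single-color heights. For each rational direction $(p,q)$ compatible with the period lattice, this construction yields a family $\{X_{m,n}\}_{0\leq m<n}$ of colored increments along that direction, arranged so that $X_{0,n}$ equals $h(np,nq)$ while satisfying the superadditivity $X_{0,n} \geq X_{0,m} + X_{m,n}$.

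The periodicity assumption renders the environment stationary and ergodic under translations by multiples of $(I,J)$. Choosing $(p,q) \in I\mathbb{Z}_{\geq 1}\times J\mathbb{Z}_{\geq 1}$, the family $\{X_{m,n}\}$ then satisfies the stationarity hypotheses of Liggett's theorem, and integrability is automatic from the deterministic bound $0 \leq X_{m,n} \leq C(n-m)$. Liggett yields $h(np,nq)/n = X_{0,n}/n \to g(p,q)$ almost surely for some deterministic limit. Finally, since $h$ is monotone in each coordinate with unit-size discrete differences, it is globally $1$-Lipschitz; this both gives Lipschitz regularity of $g$ and upgrades convergence along a dense set of compatible rational directions to \eqref{eq:asconverge}.

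The main obstacle will be the colored construction: designing the Boolean-type product so that it is simultaneously consistent with S6V's sequential sampling rule, preserves each single-color marginal exactly, and delivers the domination needed for superadditivity. Once that product is in hand, the Hammersley reformulation is essentially bookkeeping and the invocation of Liggett followed by the Lipschitz extension to all $(x,y) \in \mathbb{R}_{\geq 0}^2$ is standard.
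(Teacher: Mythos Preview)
Your proposal is correct and follows essentially the same approach as the paper: a colored extension defined via a Boolean-type product, yielding the three properties (color ignorance for stationarity, mod~2 erasure to recover the uncolored height, and monotonicity for superadditivity), followed by Liggett's theorem along period-compatible rational directions and a Lipschitz density argument. The only cosmetic difference is that the paper works with the horizontally complemented model (CS6V) rather than framing the first step as a Hammersley reformulation, but the colored construction and the verification of the superadditive ergodic hypotheses are exactly as you outline.
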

	\begin{rmk}
		The S6V model considered here is inhomogeneous and not integrable in most cases. 
		The parameters $b_1$ and $b_2$ have periodicity $I$ in space and periodicity $J$ in time. Hence, there are $IJ$ degrees of freedom. If $I, J = 1$, the S6V model is homogeneous and its limit shape has been explicitly derived in \cite[Theorem 1.1]{borodin2016stochastic} and \cite[Theorem 5.1]{aggarwal2020limit} (also see \cite{reshetikhin2018limit} for a derivation of the limit shape using a variational principle). Note that the derivations of explicit limit shapes in these two papers rely respectively on integrability and the existence of explicit stationary distributions for the model. For general $I, J$, due to the lack of these properties, we don't think that there is an explicit formula for $g$.   
	\end{rmk}
	\begin{rmk}\label{rmk:limitshape}
		Take $I = J = 1$ and consider the homogeneous S6V model where $b_1 (i, j)$ and $b_2 (i, j)$ do not depend on $i, j$. By \cite[Theorem 1.1]{borodin2016stochastic} or \cite[Theorem 5.1]{aggarwal2020limit}, if $b_1 \leq b_2$, we have
\begin{equation}\label{eq:hlimit}
		g(x, y) = 
		\begin{cases}
		\frac{\big(\sqrt{y(1-b_1)} - \sqrt{x(1-b_2)}\big)^2}{b_2 - b_1} &\qquad \frac{1 - b_2}{1-b_1} < \frac{x}{y} < \frac{1-b_1}{1-b_2}, \\
		0 &\qquad \frac{x}{y} \geq \frac{1-b_1}{1-b_2},\\
		y - x &\qquad \frac{x}{y} \leq \frac{1-b_2}{1-b_1}.
		\end{cases}
	\end{equation}
If $b_1 \geq b_2$, we have 
\begin{equation*}
g(x, y) = 
\begin{cases}
0 &\qquad x \geq y,\\
y-x &\qquad x \leq y.
\end{cases}
\end{equation*}
With Theorem \ref{thm:main}, we strengthen the convergence in probability in \cite{borodin2016stochastic, aggarwal2020limit} to the almost sure level.
	\end{rmk}
	\subsection{The complemented S6V model}
	To prove Theorem \ref{thm:main}, it is convenient to study the S6V model after \emph{horizontal complementation}. If there is a horizontal line, we erase it; if there is no horizontal line, we add it, see Figure \ref{fig:sampling} for an example. This procedure converts an ensemble of upright paths to an ensemble of downright paths. Note that this model has appeared in \cite{aggarwal2021deformed} without a specific name, and we call it the \emph{complemented S6V (CS6V) model}.

	Let $H$ be the height function of the CS6V model defined on $\mathbb{Z}_{\geq 0}^2$, where $H(x, y)$ records the number of lines that pass through or to the left of $(x, y)$. 
	It is straightforward that $H(x, y) = y - h(x, y)$, 
	see the right panel of Figure \ref{fig:sampling}. Note that by interpreting the stochastic paths as a collection of downright paths ordered in the northeast direction, these downright paths are the level lines of the height function. 
	\begin{figure}[ht]
		\centering
		\begin{tikzpicture}

		\begin{scope}[xshift = 0cm, scale = 0.7]
		\foreach \x in {0, 1, 2, 3, 4, 5, 6, 7}
		{\draw[lightmintbg] (\x, 0) -- (\x, 7);
			\draw[lightmintbg] (0, \x) -- (7, \x);
		}
		\draw[lightmintbg, ->] (7, 0) -- (7.5, 0);
		\draw[lightmintbg, ->] (0, 7) -- (0, 7.5);
		\foreach \x in {1, 2, 3, 4, 5, 6, 7}
		{
			\foreach \y in {1, 2, 3, 4, 5, 6, 7}
			\draw[fill] (\x, \y) circle (0.05);
		}
		\draw (0, 1) -- (4, 1);
		\draw (0, 2) -- (1, 2);
		\draw (0, 3) -- (2, 3);
		\draw (4, 3) -- (7.5, 3);
		\draw (0, 4) -- (4, 4);
		\draw (0, 5) -- (3, 5) -- (3, 7.5);
		\draw (0, 6) -- (7.5, 6);
		\draw (0, 7) -- (5, 7) -- (5, 7.5);
		\draw (1, 2) -- (1, 7.5);
		\draw (2, 3) -- (2, 7.5);
		\draw (3, 5) -- (3, 5.5);
		\draw (4, 1) -- (4, 3);
		\draw (4, 4) -- (4, 7.5);
		\foreach \x in {0,1,2,3,4,5,6,7} 
		\node at (\x+ 0.2, 0.2) {\footnotesize $\blue{0}$}; 
		\foreach \x in {0,1,2,3} 
		\node at (\x+ 0.2, 1.2) {\footnotesize $\blue{1}$}; 
		\foreach \x in {4,5,6,7} 
		\node at (\x+ 0.2, 1.2) {\footnotesize $\blue{0}$}; 
		\foreach \x in {0} 
		\node at (\x+ 0.2, 2.2) {\footnotesize $\blue{2}$};
		\foreach \x in {1,2,3} 
		\node at (\x+ 0.2, 2.2) {\footnotesize $\blue{1}$};
		\foreach \x in {4,5,6,7} 
		\node at (\x+ 0.2, 2.2) {\footnotesize $\blue{0}$};
		\foreach \x in {0} 
		\node at (\x+ 0.2, 3.2) {\footnotesize $\blue{3}$};
		\foreach \x in {1} 
		\node at (\x+ 0.2, 3.2) {\footnotesize $\blue{2}$};
		\foreach \x in {2,3,4,5,6,7} 
		\node at (\x+ 0.2, 3.2) {\footnotesize $\blue{1}$};
		\foreach \x in {0} 
		\node at (\x+ 0.2, 4.2) {\footnotesize $\blue{4}$};
		\foreach \x in {1} 
		\node at (\x+ 0.2, 4.2) {\footnotesize $\blue{3}$};
		\foreach \x in {2, 3} 
		\node at (\x+ 0.2, 4.2) {\footnotesize $\blue{2}$};
		\foreach \x in {4, 5, 6, 7} 
		\node at (\x+ 0.2, 4.2) {\footnotesize $\blue{1}$};
		\foreach \x in {0} 
		\node at (\x+ 0.2, 5.2) {\footnotesize $\blue{5}$};
		\foreach \x in {1} 
		\node at (\x+ 0.2, 5.2) {\footnotesize $\blue{4}$};
		\foreach \x in {2} 
		\node at (\x+ 0.2, 5.2) {\footnotesize $\blue{3}$};
		\foreach \x in {3} 
		\node at (\x+ 0.2, 5.2) {\footnotesize $\blue{2}$};
		\foreach \x in {4,5,6,7} 
		\node at (\x+ 0.2, 5.2) {\footnotesize $\blue{1}$};
		\foreach \x in {0} 
		\node at (\x+ 0.2, 6.2) {\footnotesize $\blue{6}$};
		\foreach \x in {1} 
		\node at (\x+ 0.2, 6.2) {\footnotesize $\blue{5}$};
		\foreach \x in {2} 
		\node at (\x+ 0.2, 6.2) {\footnotesize $\blue{4}$};
		\foreach \x in {3} 
		\node at (\x+ 0.2, 6.2) {\footnotesize $\blue{3}$};
		\foreach \x in {4,5,6,7} 
		\node at (\x+ 0.2, 6.2) {\footnotesize $\blue{2}$};
		\foreach \x in {0} 
		\node at (\x+ 0.2, 7.2) {\footnotesize $\blue{7}$};
		\foreach \x in {1} 
		\node at (\x+ 0.2, 7.2) {\footnotesize $\blue{6}$};	
		\foreach \x in {2} 
		\node at (\x+ 0.2, 7.2) {\footnotesize $\blue{5}$};	
		\foreach \x in {3} 
		\node at (\x+ 0.2, 7.2) {\footnotesize $\blue{4}$};
		\foreach \x in {4} 
		\node at (\x+ 0.2, 7.2) {\footnotesize $\blue{3}$};
		\foreach \x in {5,6,7} 
		\node at (\x+ 0.2, 7.2) {\footnotesize $\blue{2}$};
		\node at (-0.3, -0.3) {\footnotesize $(0, 0)$};
		\end{scope}

		\begin{scope}[xshift = 7cm, scale = 0.7]
		
		\foreach \x in {0, 1, 2, 3, 4, 5, 6, 7}
		{\draw[lightmintbg] (\x, 0) -- (\x, 7);
			\draw[lightmintbg] (0, \x) -- (7, \x);
		}
		\foreach \x in {1, 2, 3, 4, 5, 6, 7}
		{
			\foreach \y in {1, 2, 3, 4, 5, 6, 7}
			\draw[fill] (\x, \y) circle (0.05);
		}
		\draw[lightmintbg, ->] (7, 0) -- (7.5, 0);
		\draw[lightmintbg, ->] (0, 7) -- (0, 7.5);
		\draw (1, 2) -- (7.5, 2);
		\draw (2, 3) -- (4, 3);
		\draw (4, 4) -- (7.5, 4); 
		\draw (3, 5) -- (7.5, 5);
		\draw (1, 2) -- (1, 7.5);
		\draw (2, 3) -- (2, 7.5);
		\draw (7.5, 1) -- (4, 1) -- (4, 3);
		\draw (3, 5) -- (3, 7.5);
		\draw (4, 4) -- (4, 7.5);
		\draw (5, 7.5) -- (5, 7) -- (7.5, 7);
		\foreach \x in {0,1,2,3,4,5,6,7} 
		\node at (\x+ 0.2, 0.2) {\footnotesize $\blue{0}$}; 
		\foreach \x in {0,1,2,3} 
		\node at (\x+ 0.2, 1.2) {\footnotesize $\blue{0}$}; 
		\foreach \x in {4,5,6,7} 
		\node at (\x+ 0.2, 1.2) {\footnotesize $\blue{1}$}; 
		\foreach \x in {0} 
		\node at (\x+ 0.2, 2.2) {\footnotesize $\blue{0}$};
		\foreach \x in {1,2,3} 
		\node at (\x+ 0.2, 2.2) {\footnotesize $\blue{1}$};
		\foreach \x in {4,5,6,7} 
		\node at (\x+ 0.2, 2.2) {\footnotesize $\blue{2}$};
		\foreach \x in {0} 
		\node at (\x+ 0.2, 3.2) {\footnotesize $\blue{0}$};
		\foreach \x in {1} 
		\node at (\x+ 0.2, 3.2) {\footnotesize $\blue{1}$};
		\foreach \x in {2,3,4,5,6,7} 
		\node at (\x+ 0.2, 3.2) {\footnotesize $\blue{2}$};
		\foreach \x in {0} 
		\node at (\x+ 0.2, 4.2) {\footnotesize $\blue{0}$};
		\foreach \x in {1} 
		\node at (\x+ 0.2, 4.2) {\footnotesize $\blue{1}$};
		\foreach \x in {2, 3} 
		\node at (\x+ 0.2, 4.2) {\footnotesize $\blue{2}$};
		\foreach \x in {4, 5, 6, 7} 
		\node at (\x+ 0.2, 4.2) {\footnotesize $\blue{3}$};
		\foreach \x in {0} 
		\node at (\x+ 0.2, 5.2) {\footnotesize $\blue{0}$};
		\foreach \x in {1} 
		\node at (\x+ 0.2, 5.2) {\footnotesize $\blue{1}$};
		\foreach \x in {2} 
		\node at (\x+ 0.2, 5.2) {\footnotesize $\blue{2}$};
		\foreach \x in {3} 
		\node at (\x+ 0.2, 5.2) {\footnotesize $\blue{3}$};
		\foreach \x in {4,5,6,7} 
		\node at (\x+ 0.2, 5.2) {\footnotesize $\blue{4}$};
		\foreach \x in {0} 
		\node at (\x+ 0.2, 6.2) {\footnotesize $\blue{0}$};
		\foreach \x in {1} 
		\node at (\x+ 0.2, 6.2) {\footnotesize $\blue{1}$};
		\foreach \x in {2} 
		\node at (\x+ 0.2, 6.2) {\footnotesize $\blue{2}$};
		\foreach \x in {3} 
		\node at (\x+ 0.2, 6.2) {\footnotesize $\blue{3}$};
		\foreach \x in {4,5,6,7} 
		\node at (\x+ 0.2, 6.2) {\footnotesize $\blue{4}$};
		\foreach \x in {0} 
		\node at (\x+ 0.2, 7.2) {\footnotesize $\blue{0}$};
		\foreach \x in {1} 
		\node at (\x+ 0.2, 7.2) {\footnotesize $\blue{1}$};	
		\foreach \x in {2} 
		\node at (\x+ 0.2, 7.2) {\footnotesize $\blue{2}$};	
		\foreach \x in {3} 
		\node at (\x+ 0.2, 7.2) {\footnotesize $\blue{3}$};
		\foreach \x in {4} 
		\node at (\x+ 0.2, 7.2) {\footnotesize $\blue{4}$};
		\foreach \x in {5,6,7} 
		\node at (\x+ 0.2, 7.2) {\footnotesize $\blue{5}$};
		\node at (-0.3, -0.3) {\footnotesize $(0, 0)$};
		\end{scope}
		
		\end{tikzpicture}
		\caption{\textbf{Left panel:} A sampling of the S6V model on the first quadrant, with the blue numbers denoting the height function. \textbf{Right panel:} The CS6V model obtained by horizontally complementing the S6V model.}
		\label{fig:sampling}
	\end{figure}
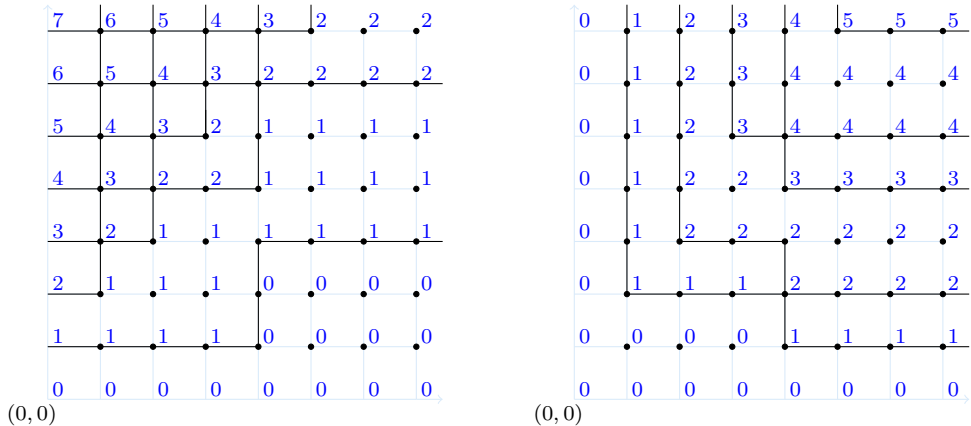

	There is another way to sample the CS6V model on the first quadrant that is more similar to how we defined the S6V model above. Consider the six vertex configurations in Figure \ref{fig:cs6v}. These configurations are obtained by horizontally complementing the configurations in Figure \ref{fig:s6v}. We consider \emph{empty boundary data}---the boundary data obtained by horizontally complementing the step initial data. In other words, no lines enter the quadrant from either the left boundary $\{1\} \times \mathbb{Z}_{\geq 1}$ or the bottom boundary $\mathbb{Z}_{\geq 1} \times \{1\}$. We sequentially tile the vertices $(1, 1), (2, 1), (1, 2), (3, 1), (2, 2), (1, 3), \dots$ with one of the six vertex configurations. We can view the tiling as a family of downright paths ordered in the northeast direction. This path ensemble has the same law as the one obtained by complementing the S6V model.
	\begin{figure}[ht]
		\centering
		\begin{tabular}{|c|c|c|c|c|c|c|}
			\hline
			Type & I & II & III & IV & V & VI \\
			\hline
			\begin{tikzpicture}[scale = 1.5]
			\draw[fill][white] (0.5, 0) circle (0.05);
			\draw[thick][white] (0, 0) -- (1,0);
			\draw[thick][white] (0.5, -0.5) -- 
			(0.5,0.5);
			\node at (0.5, 0) {Configuration at $(i, j)$};
			\end{tikzpicture}
			&
			\begin{tikzpicture}[scale = 1.2]
			\draw[thick][white] (0, 0) -- (1,0);
			\draw[thick] (0.5, -0.5) -- (0.5,0.5);
			\draw[fill] (0.5, 0) circle (0.05);
			\end{tikzpicture}
			&
			\begin{tikzpicture}[scale = 1.2]
			\draw[thick] (0, 0) -- (1,0);
			\draw[thick][white] (0.5, -0.5) -- (0.5,0.5);
			\draw[fill] (0.5, 0) circle (0.05);
			\end{tikzpicture}
			&
			\begin{tikzpicture}[scale = 1.2]
			\draw[thick] (0, 0) -- (1,0);
			\draw[thick] (0.5, -0.5) -- (0.5,0.5);
			\draw[fill] (0.5, 0) circle (0.05);
			\end{tikzpicture}
			&
			\begin{tikzpicture}[scale = 1.2]
			\draw[thick] (0, 0) -- (0.5,0);
			\draw[thick][white] (0.5, 0) -- (0.5, 0.5);
			\draw[thick][white] (0.5, 0) -- (1, 0);
			\draw[thick] (0.5, -0.5) -- (0.5, 0);
			(0.5,0.5);
			\draw[fill] (0.5, 0) circle (0.05);
			\end{tikzpicture}
			&
			\begin{tikzpicture}[scale = 1.2]
			\draw[thick][white] (0, 0) -- (1,0);
			\draw[thick][white] (0.5, -0.5) -- (0.5,0.5);
			\draw[fill] (0.5, 0) circle (0.05);
			\end{tikzpicture}
			&
			\begin{tikzpicture}[scale = 1.2]
			\draw[thick] (0.5, 0) -- (1,0);
			\draw[thick] (0.5, 0) -- (0.5, 0.5);
			\draw[thick][white] (0, 0) -- (0.5, 0);
			\draw[thick][white] (0.5, -0.5) -- (0.5, 0);
			(0.5,0.5);
			\draw[fill] (0.5, 0) circle (0.05);
			\end{tikzpicture}
			\\
			\hline
			Weight 
			& 1 & 1 & $b_1(i, j)$ & $1- b_1 (i, j)$ & $b_2 (i, j)$ & $1 - b_2 (i, j)$\\
			\hline
		\end{tabular}
		\caption{Six types of configurations for the CS6V model}
		\label{fig:cs6v}
	\end{figure}

	Due to the relationship $H(x, y) = y - h(x,y)$, proving Theorem \ref{thm:main} is equivalent to proving the following theorem. 
	\begin{theorem}\label{thm:maincs6v}
		There exists a Lipschitz function $g$ such that with probability $1$, 
		\begin{equation*}
		\lim_{n \to \infty} \frac{H(\lfloor nx \rfloor, \lfloor ny\rfloor)}{n} = y - g(x, y), \qquad \forall x, y \in \mathbb{R}_{\geq 0}.
		\end{equation*}
	\end{theorem}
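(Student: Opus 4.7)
The plan is to prove Theorem \ref{thm:maincs6v} (equivalent to Theorem \ref{thm:main} via $H = y - h$) by applying Liggett's superadditive ergodic theorem to a family of random variables constructed from the CS6V height function. The function $H$ is not naturally superadditive on its own, so the first task is to realize it via the mapping mentioned in the abstract, rewriting each CS6V downright path as a trajectory in a deformed discrete Hammersley-type model. In that reformulation, $H(x,y)$ is recast (up to boundary corrections) as a last-passage-like count of paths across a rectangle with fixed corners, which is the setting where a superadditive inequality becomes natural.

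The main construction is a colored version of the CS6V model. On a single probability space, I would couple a family $\{H^{(a,b)}\}$ of CS6V height functions indexed by the lower-left corner $(a,b) \in \mathbb{Z}_{\geq 0}^2$ so that (i) each marginal $H^{(a,b)}$ has the correct CS6V law on the quadrant shifted to $(a,b)$ with empty boundary data, and (ii) the colored downright paths interact monotonically, producing a superadditive inequality
\[
L_{(a,b),(c,d)} \;\geq\; L_{(a,b),(e,f)} + L_{(e,f),(c,d)}
\]
for any coordinate-wise ordered triple $(a,b) \preceq (e,f) \preceq (c,d)$, where $L_{(a,b),(c,d)}$ is the passage quantity extracted from the Hammersley reformulation on the rectangle $[a,c]\times[b,d]$. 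At each vertex $(i,j)$, the joint update for all colors should be driven by a single uniform random variable through a Boolean-type product of the local probabilities $b_1(i,j)$ and $b_2(i,j)$: for every subset $S$ of colors present on the input, the sampler selects a consistent output configuration whose marginal on each individual color reproduces the site-dependent CS6V rule. This is the generalization of \cite{drillick2022hydrodynamics} promised in the abstract.

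With this coupling in place, Assumption \ref{assumption} furnishes periodicity $(I,J)$ of the environment, so the joint law of $L$ is stationary under shifts by $I\mathbb{Z}\times J\mathbb{Z}$, and ergodicity follows from the independence of the driving uniforms across vertices. Integrability is immediate from the trivial bound $0 \leq L_{(a,b),(c,d)} \leq (c-a)+(d-b)$. Liggett's superadditive ergodic theorem, applied along each rational ray in the first quadrant, then yields an almost sure limit $n^{-1} L_{(0,0),(\lfloor nx \rfloor,\lfloor ny \rfloor)} \to \tilde g(x,y)$; monotonicity and the linear bound upgrade this to simultaneous convergence for all $(x,y)\in\mathbb{R}_{\geq 0}^2$ with a Lipschitz limit. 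Translating back through the Hammersley mapping produces the function $g$ required by Theorem \ref{thm:maincs6v}.

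The principal obstacle is the construction of the colored model, and in particular the Boolean-type product defining the joint vertex sampler. In the homogeneous case the standard basic coupling suffices, but in the inhomogeneous periodic setting the sampler at vertex $(i,j)$ must simultaneously produce the correct marginal law for \emph{every} subset of the colors entering the vertex, with probabilities determined site-specifically by $b_1(i,j)$ and $b_2(i,j)$. Designing one update rule satisfying all of these $2^k$-many consistency conditions (where $k$ is the number of input colors) while also preserving the monotonicity needed to derive the superadditivity of $L$ is the technical heart of the argument; once this is done, the ergodic theorem and the passage to a Lipschitz limit are comparatively routine.
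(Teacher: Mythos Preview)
Your overall plan---build a colored CS6V coupling via a Boolean-type product on the vertex weights, verify the marginals and a monotonicity property, feed the resulting family into Liggett's superadditive ergodic theorem direction by direction, and then pass to all of $\mathbb{R}_{\geq 0}^2$ by Lipschitz continuity---is exactly the route the paper takes. The skeleton is correct.

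Two points where your sketch drifts from what is actually needed. First, the discrete Hammersley connection is not the mechanism for superadditivity. In the paper it is only the degenerate case $b_1=0$, and the ``last-passage-like count'' you describe is precisely the naive definition of $X_{m,n}$ that the paper explicitly rejects: with $b_1>0$ the set of type III/VI vertices inside $[m+1,n]^2$ depends on the lines entering from the boundary, so stationarity and ergodicity of $\{X_{(n-1)k,nk}\}$ fail. The colored model is not layered on top of the Hammersley picture; it \emph{replaces} it. The quantities $X_{m,n}$ are defined directly from the colored line configuration (a mod~$2$ count of vertical lines of the relevant colors crossing a segment), and superadditivity comes from the monotonicity property of the two-color $\mathsf{L}^2$-matrix, not from any last-passage interpretation.

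Second, your list of properties for the colored sampler (``correct marginals for every subset of colors'' plus monotonicity) omits the key projection property the paper calls \emph{mod~$2$ erasure}: if one merges a block of colors into a single color and deletes pairs of identically-colored lines travelling together, the result is again distributed as the CS6V model with fewer colors. This is what guarantees $X_{0,n}=H(Nnx,Nny)$, i.e.\ that the colored observable actually computes the original height function; marginal correctness for single colors is not enough. The paper's Boolean product is engineered so that stochasticity, color ignorance, mod~$2$ erasure, and monotonicity all hold simultaneously---your proposal identifies the first, second, and fourth but not the third, and without it the link between $X_{0,n}$ and $H$ is missing.
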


	\subsection{The CS6V model and discrete Hammersley process}
 \label{sec:discreteH}
	\cite{seppalainen1997increasing} studied Ulam's problem on the planar lattice. This model is also referred to as the discrete Hammersley process in \cite{basdevant2015discrete, ciech2019order}. Consider the integer lattice on $\mathbb{Z}_{\geq 0}^2$ and a  random set $\xi$ of integer points chosen independently with probability $p$. Define the  partial order $\prec$ such that $(x_1, y_1) \prec (x_2, y_2)$ if and only if $x_1 < x_2$ and $y_1 < y_2$.

	Let $H^{\mathsf{d}}$ denote the height function defined on $\mathbb{Z}_{\geq 0}^2$: 
	\begin{equation*}
	H^{\mathsf{d}} (x, y) = \max\Big\{L: \text{ there exist integer points } (x_1, y_1) \prec \dots \prec (x_L, y_L) \in \xi \cap [1, x] \times [1, y] \Big\}.
	\end{equation*}
	As noticed in \cite{basdevant2015discrete}, we can construct a collection of downright paths which play the role of the level lines of $H^\mathsf{d}$. We first use a downright path to connect the minimal points of $\xi$ under the order $\prec$.  We remove these points from $\xi$ and connect the new minima to obtain the second line and so on. The height function starts from $H^\mathsf{d}(0, 0) = 0$ and whenever we cross a downright path from southwest to northeast, the height function increases by $1$, see Figure \ref{fig:dham}.
	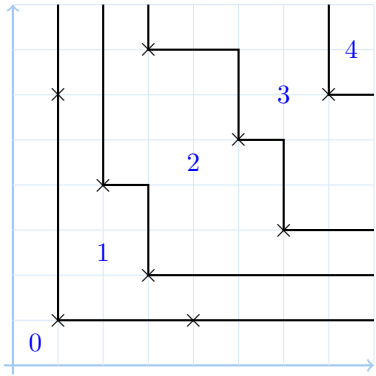
\begin{figure}[ht]
			\begin{tikzpicture}[scale = 0.6]
			\draw[mintbg, thick, ->] (-0.2, 0) -- (8, 0);
			\draw[mintbg, thick, ->] (0, -0.2) -- (0, 8);
			\foreach \x in {1, 2, 3, 4, 5, 6, 7, 8}
			{
				\draw[lightmintbg] (\x, 0) -- (\x, 8);
				\draw[lightmintbg] (0, \x) -- (8, \x);
			}
			\node at (1, 1) {$\times$};
			\node at (1, 6) {$\times$};
			\node at (4, 1) {$\times$};
			\node at (2, 4) {$\times$};
			\node at (3, 2) {$\times$};
			\node at (3, 7) {$\times$};
			\node at (5, 5) {$\times$};
			\node at (6, 3) {$\times$};
			\node at (7, 6) {$\times$};
			\draw[thick] (1, 8) -- (1, 1) -- (8, 1);
			\draw[thick] (2, 8) -- (2, 4) -- (3, 4) -- (3, 2) -- (8, 2);
			\draw[thick] (3, 8) -- (3, 7) -- (5, 7) -- (5, 5) -- (6, 5) -- (6, 3) -- (8, 3);
			\draw[thick] (7, 8) -- (7, 6) -- (8, 6);
			\node at (0.5, 0.5) {$\blue{0}$};
			\node at (2, 2.5) {$\blue{1}$};
			\node at (4, 4.5) {$\blue{2}$};
			\node at (6, 6) {$\blue{3}$};
			\node at (7.5, 7) {$\blue{4}$};
			\end{tikzpicture}
			\caption{Illustration of the discrete Hammersley process. The $\times$'s are chosen independently on the lattice with probability $p$.}
			\label{fig:dham}
\end{figure}
	The following theorem shows that the CS6V model is a deformed version of the discrete Hammersley process. 
	\begin{theorem}\label{thm:cs6vdham}
		Take $b_1 = 0$ and $b_2 = 1-p$. The CS6V model degenerates to the discrete Hammersley process.  
	\end{theorem}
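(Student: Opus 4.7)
The plan is to prove the degeneration by a direct, vertex-by-vertex coupling that makes the CS6V sampling procedure visibly match the Hammersley level-line construction.

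First, I would record the simplified CS6V dynamics under the specialization $b_1 = 0$, $b_2 = 1-p$. Type III has weight zero (so it never appears), while Type IV has weight one (so it is forced whenever both the south and west inputs are occupied). Hence the sampling at each vertex $(i,j)$ reduces to the following cases, determined entirely by the south/west input pair: forced Type I (only south input, vertical pass-through), forced Type II (only west input, horizontal pass-through), forced Type IV (both inputs, ES corner); and, when both inputs are empty, Type V with probability $1-p$ or Type VI (an SE corner, emitting east- and north-bound segments) with probability $p$. These five outcomes correspond exactly to the possible local configurations at a vertex of a downright-path ensemble arising from the Hammersley construction: absent from any path, horizontal or vertical pass-through, ES corner, or SE corner.

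Next, using i.i.d.\ uniforms $\{U_{i,j}\}$ on $[0,1]$, I would couple both processes: run the CS6V sampling with $U_{i,j}$ as the random draw at fresh vertices, and simultaneously set $\xi = \{(i,j) : U_{i,j} > 1-p\}$ to produce the Hammersley input. The goal becomes to show that the downright paths built by the CS6V sweep in the order $(1,1), (2,1), (1,2), \ldots$ coincide with the Hammersley level lines of $\xi$. I would argue this by induction on this sequential order. The inductive hypothesis, that the CS6V configuration at every earlier-sampled vertex matches the Hammersley local configuration, guarantees that the south/west input pair at $(i,j)$ correctly encodes the local state of the Hammersley level lines adjacent to $(i,j)$. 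In the three forced cases the local Hammersley configuration at $(i,j)$ is geometrically determined by the neighbors and matches the CS6V output. In the free case, no level line has reached $(i,j)$ from the SW, so $(i,j)$ becomes an SE corner of a new level line precisely when $(i,j) \in \xi$, i.e., when $U_{i,j} > 1-p$, matching the CS6V choice.

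The main obstacle is justifying the last equivalence cleanly, because not every point of $\xi$ corresponds to a Type VI vertex in CS6V---pass-through points of $\xi$ (those lying on a straight segment of some level line) sit at non-fresh CS6V vertices whose configurations are forced, contributing no randomness. The required compatibility is the key lemma that the Hammersley level lines are insensitive to pass-through points of $\xi$: if a point of $\xi$ lies on a straight segment of a downright path connecting the other minima, then inserting or removing it from $\xi$ leaves the entire collection of level lines unchanged. I would establish this by analyzing the ``peel off the minima'' construction iteration by iteration, checking that such an added minimum is absorbed into an existing straight segment without altering any subsequent iteration. With this lemma, the $U$-values at non-fresh CS6V vertices enter $\xi$ in a way that is invisible to the Hammersley level lines, and the coupling identifies the two random path ensembles pointwise, proving the theorem.
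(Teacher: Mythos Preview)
Your approach is essentially the paper's: both reduce to checking that under $b_1=0$, $b_2=1-p$ the CS6V vertex rule coincides with a sequential sampling rule for the Hammersley level lines. The paper's proof is terse---it simply asserts this coincidence ``by the definition of the discrete Hammersley process''---while your plan makes the vertex-by-vertex coupling explicit and isolates the one subtlety the paper glosses over, namely that $\xi$ is an i.i.d.\ field on \emph{all} vertices whereas the CS6V dynamics consult the randomness only at fresh vertices.

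There is, however, a small gap in your insensitivity lemma. As phrased, it only covers points of $\xi$ lying on a \emph{straight segment} of a level line, i.e.\ CS6V Types I and II. But Type IV vertices---the ``ES corners'' you correctly list among the forced cases---are also non-fresh, and a point of $\xi$ can land there too; you need insensitivity at those corners as well. The fix is easy, and in fact a single observation dispatches all three forced cases at once and bypasses the iteration-by-iteration peeling argument: the Hammersley height function obeys the last-passage recursion
\[
H^{\mathsf d}(i,j)=\max\bigl\{H^{\mathsf d}(i-1,j),\ H^{\mathsf d}(i,j-1),\ H^{\mathsf d}(i-1,j-1)+\mathbbm{1}_{\{(i,j)\in\xi\}}\bigr\},
\]
so $\xi(i,j)$ is irrelevant precisely when $\max\{H^{\mathsf d}(i-1,j),H^{\mathsf d}(i,j-1)\}>H^{\mathsf d}(i-1,j-1)$, which is exactly the non-fresh condition. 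Plugging this into your induction makes the match at every vertex immediate, with no separate lemma needed.
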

	\begin{proof}
		It suffices to show that the collection of stochastic lines in Figure \ref{fig:dham} has the same distribution as those in the CS6V model when $b_1 = 0$ and $b_2 = 1-p$. By the definition of the discrete Hammersley process, we can also sample the stochastic lines in Figure \ref{fig:dham} by sequentially sampling the configurations of the vertices in the linear order $(1, 1), (2, 1), (1, 2), (3, 1), (2, 2), (1, 3), \dots$. If there is only a single line entering a vertex from either the left or bottom, then the line continues. If there are lines entering from both the left and bottom, then they annihilate each other and form a corner. If no lines enter from the left and bottom, then with probability $1-b_2 = p$ the vertex is chosen to be a part of $\xi$, which corresponds to choosing a configuration of type VI from Figure \ref{fig:cs6v}. Otherwise, we choose a configuration of type V. This coincides with the sequential sampling rule for the CS6V model. 
	\end{proof}
	Using this, we recover the almost sure limit shape of the discrete Hammersley process established in \cite[Theorem 1]{seppalainen1997increasing} and  \cite[Theorem 1]{basdevant2015discrete}.
	\begin{corollary}
		With probability $1$, for all $x, y \geq 0$,
		\begin{equation*}
		\lim_{t \to \infty} \frac{H^{\mathsf{d}}(tx, ty)}{t} = 
		\begin{cases}
		(1-p)^{-1} \big(2\sqrt{pxy} - (x+y)p\big) &\qquad \text{ if } p < \min(\tfrac{x}{y}, \tfrac{y}{x}),\\
		\min(x, y) &\qquad \text{else}.
		\end{cases}
		\end{equation*}
	\end{corollary}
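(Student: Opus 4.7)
The plan is to combine Theorem \ref{thm:cs6vdham}, Theorem \ref{thm:maincs6v}, and the explicit homogeneous limit-shape formula recorded in Remark \ref{rmk:limitshape}. By Theorem \ref{thm:cs6vdham}, choosing $b_1 = 0$ and $b_2 = 1-p$ (constants in $(i,j)$, so Assumption \ref{assumption} holds trivially with $I=J=1$) makes the CS6V height function $H$ coincide in distribution with the discrete Hammersley height function $H^{\mathsf{d}}$. Theorem \ref{thm:maincs6v} then yields, almost surely and simultaneously for all $x,y \geq 0$,
\[
\lim_{n \to \infty} \frac{H^{\mathsf{d}}(\lfloor nx \rfloor, \lfloor ny \rfloor)}{n} \;=\; y - g(x,y),
\]
where $g$ is the explicit function from Remark \ref{rmk:limitshape} (applicable since $b_1 = 0 \leq 1-p = b_2$).

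Next I would substitute $b_1 = 0$ and $b_2 = 1-p$ into \eqref{eq:hlimit} and verify the three cases of the corollary. The middle range $(1-b_2)/(1-b_1) < x/y < (1-b_1)/(1-b_2)$ becomes $p < x/y < 1/p$, which is exactly the condition $p < \min(x/y, y/x)$. On this range a short algebraic calculation gives
\[
y - g(x,y) \;=\; y - \frac{(\sqrt{y} - \sqrt{xp})^2}{1-p} \;=\; \frac{2\sqrt{pxy} - (x+y)p}{1-p},
\]
matching the first branch of the corollary. In the regime $x/y \geq 1/p$, one has $g(x,y) = 0$ and $x \geq y$, so $y - g(x,y) = y = \min(x,y)$; in the regime $x/y \leq p$, one has $g(x,y) = y-x$ and $x \leq y$, so $y - g(x,y) = x = \min(x,y)$. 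Together these handle the ``else'' case.

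Finally, to promote the convergence along integer $n \to \infty$ to the continuous scaling $t \to \infty$ stated in the corollary, I would exploit the fact that $H^{\mathsf{d}}$ is monotone in each coordinate. Given $t > 0$ and $\delta > 0$, setting $n = \lfloor t \rfloor$ one sandwiches
\[
H^{\mathsf{d}}(\lfloor n(x-\delta) \rfloor, \lfloor n(y-\delta) \rfloor) \;\leq\; H^{\mathsf{d}}(\lfloor tx \rfloor, \lfloor ty \rfloor) \;\leq\; H^{\mathsf{d}}(\lfloor (n+1)(x+\delta) \rfloor, \lfloor (n+1)(y+\delta) \rfloor),
\]
divides by $t$, sends $t \to \infty$ using the already-established integer-sequence limit, and then $\delta \to 0$ using continuity of $y - g(x,y)$ in $(x,y)$. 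There is no real obstacle; the corollary is essentially a verification, and the only mildly non-automatic ingredient is this last sandwiching, which is standard.
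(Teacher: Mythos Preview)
Your proposal is correct and follows exactly the paper's own proof: combine Theorem \ref{thm:cs6vdham} with Theorem \ref{thm:maincs6v} and plug $b_1=0$, $b_2=1-p$ into the explicit limit-shape formula from Remark \ref{rmk:limitshape}. Your detailed case-by-case algebra and your final sandwiching argument (passing from integer $n$ to real $t$) are points the paper simply leaves implicit; they are welcome elaborations, not departures.
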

	\begin{proof}
		One can readily check that the right-hand side above equals $y - g(x, y)$ when $b_1 = 0$ and $b_2 = 1-p$, using the formula of $g$ in Remark \ref{rmk:limitshape}. 
		The corollary then follows from a combination of Theorems  \ref{thm:maincs6v} and \ref{thm:cs6vdham}. 
	\end{proof}
	\subsection{Connection to the $t$-PNG model}  
The $t$-PNG model introduced in \cite{aggarwal2021deformed} is a deformation of the polynuclear growth (PNG) model \cite{prahofer2000statistical}. We fix a parameter $t \in [0,1]$ and place a Poisson point process with intensity $1$ on the upper-right quadrant representing \emph{nucleations}. We draw lines emanating from each of these nucleations in both the upward and rightward directions until they collide with one another. We call these collision points \emph{intersection points}. Given the Poisson nucleations, we sample the outcomes of the intersection points (lines will either cross or annihilate each other) starting with the intersection point which has the smallest sum of $x$- and $y$- coordinates and moving sequentially outward. At an intersection point, the two lines will cross each other with probability $t$ and will annihilate each other with probability $1-t$, forming a corner. Note that when lines cross, they might generate new intersection points. We refer to Figure \ref{fig:tpng} for a sampling of the $t$-PNG model. 
	\begin{figure}[ht]
			\centering
			\begin{tikzpicture}[scale = 0.6]
			\draw[thick, mintbg, ->] (-0.2, 0) -- (8, 0);
			\draw[thick, mintbg, ->] (0, -0.2) -- (0, 8);
			\node at (4, 0.4) {$\times$};
			\node at (2.5, 7) {$\times$};
			\node at (1, 4) {$\times$};
			\node at (6, 3) {$\times$};
			\node at (1.5, 2) {$\times$};
			\draw (1, 8) -- (1, 4) -- (1.5, 4) -- (1.5, 2) -- (4, 2) -- (4, 0.4) -- (8, 0.4);
			\draw (4, 8) -- (4, 2) -- (8, 2);
			\draw (2.5, 8) -- (2.5, 7) -- (6, 7) -- (6, 3) -- (8, 3);
			\end{tikzpicture}
			\caption{A sampling of the $t$-PNG model, where ``$\times$" are the Poisson nucleations.
			}
			\label{fig:tpng}

	\end{figure}
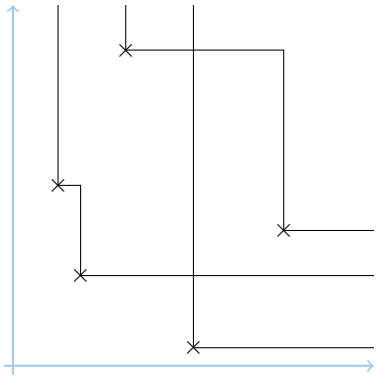

Consider the CS6V model where $b_1$ and $b_2$ do not depend on $i, j$. As noticed by \cite{aggarwal2021deformed}, if we take $b_1 = t$ and $b_2 = 1 - \e^2$ and simultaneously scale the $x$- and $y$-axis by $\e^{-1}$, the CS6V model degenerates to the $t$-PNG model as $\e \to 0$.

	Using integrable methods, \cite{aggarwal2021deformed} proved a weak law of large numbers and a fluctuation theorem for the height function of the $t$-PNG model. In \cite{drillick2022hydrodynamics}, we proved a strong law of large numbers of the $t$-PNG model by constructing a colored $t$-PNG model and using Liggett's superadditive ergodic theorem (one can easily go from the usual subadditive ergodic theorem to the superadditive version by placing negative signs as needed). 
	
	\subsection{Proof ideas}
	For the discrete Hammersley process defined in Section \ref{sec:discreteH}, the strong law of large numbers follows as a direct consequence of the superadditive ergodic theorem (see Theorem \ref{thm:liggettergodic} for the full statement). To apply the superadditive ergodic theorem, one needs to construct a family of superadditive random variables $\{X_{m,n} : 0 \leq m \leq n\}$ where $\{X_{0,n}, n \geq 0\}$ records the height function. A subfamily of the random variables also needs to be ergodic. For the discrete Hammersley process, one can simply define   
\begin{equation}\label{eq:dHXmn}
X_{m, n} := \max\Big\{L: \text{ there exist integer points } (x_1, y_1) \prec\dots \prec (x_L, y_L) \in \xi \cap [m+1, n] \times [m+1, n] \Big\}.
\end{equation}
It is straightforward to check that $\{X_{m, n}: 0 \leq m \leq n\}$ satisfies the condition required by the superadditive ergodic theorem.

For the CS6V model, one can also define a family of random variables $\{X_{m, n}: 0 \leq m \leq n\}$ by slightly modifying the above definition and setting 
\begin{equation*}
X_{m, n} := \max\Big\{L: \text{ there exist integer points } (x_1, y_1) \preceq \dots \preceq (x_L, y_L) \in \eta \cap [m+1, n] \times [m+1, n] \Big\},  
\end{equation*}
where $(x_1, y_1) \preceq (x_2, y_2)$ if $x_1 \leq x_2$ and $y_1 \leq y_2$, and $\eta$ represents the set of vertices of type III and VI in Figure \ref{fig:cs6v}. It is easy to verify that the definition of $X_{m, n}$ above coincides with that in \eqref{eq:dHXmn} when $b_1 = 0$ and $b_2 = 1- p$.

We check whether the conditions in the superadditive ergodic theorem are still satisfied for this modified family of random variables. Indeed, the random variables are still superadditive, and $\{X_{0, n}: n \geq 0\}$ records the height function as desired. However, the ergodic (and even stationary) property no longer holds, since the random variable $X_{m, n}$ depends on the lines entering the box $[m+1, n] \times [m+1, n]$ from the left and bottom boundaries, and the distribution of these lines varies with $m,n$.  
 
To overcome this difficulty, we apply a similar idea as in \cite{drillick2022hydrodynamics} and construct a colored CS6V model. The colored CS6V model is a generalization of the CS6V model, where each line is assigned a color. We denote the different colors by integers $i \in \mathbb{N}$, and we say that the color $i$ has higher priority than the color $j$ if $i < j$. We allow multiple (but only finitely many) lines with different colors to travel together, with the restriction that lines traveling together must have different colors. 

The colored CS6V model is defined by specifying a sampling rule for when horizontal lines and vertical lines meet. The sampling rule is given by a family of stochastic matrices $\{\mathsf{L}^n, n \in \mathbb{N}\}$ that are consistent. More concretely, the matrix $\LL^n$ has both rows and columns indexed by $\{0, 1\}^n \times \{0, 1\}^n$. The  matrix elements are given by $\mathsf{L}^n (\bi, \bj; \bk, \bl)$, where the four vectors $\bi, \bj, \bk, \bl \in \{0, 1\}^n$ specify the number of lines (either zero or one) of each color in $\{1, \dots, n\}$ on the bottom, left, top, and right of an intersection, respectively (see Figure \ref{fig:intersectionconfig}). The stochastic matrices give a probability measure on the output lines $\bk, \bl$ from an intersection point given the input lines $\bi, \bj$. In Section \ref{sec:Lmatrices}, we will explicitly define the matrices $\{\LL^n\}_{n \geq 1}$ and show that they satisfy three properties. We will now give a brief overview of these properties, and the precise statements will be given in Propositions \ref{prop:colorignore}, \ref{prop:mod2erasure}, and \ref{prop:monotonicity}.

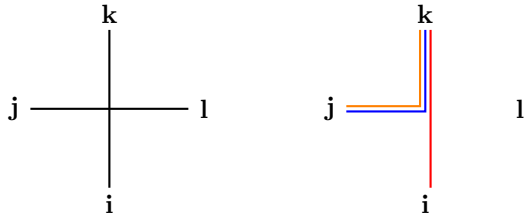
\begin{figure}[ht]
	\centering
	\begin{tikzpicture}[scale = 0.7]
	\begin{scope}
	\draw[thick] (0, 0) -- (3, 0);
	\draw[thick] (1.5, -1.5) -- (1.5, 1.5);
	\node at (1.5, -1.8) {$\bi$}; 
	\node at (-0.3, 0) {$\bj$}; 
	\node at (1.5, 1.8) {$\bk$};
	\node at (3.3, 0) {$\bl$};
	\end{scope}
	\begin{scope}[xshift = 6cm]
	\draw[thick, blue] (0, -0.05) -- (1.5, -0.05) -- (1.5, 1.5);
	\draw[thick, orange] (0, 0.05) -- (1.4, 0.05) -- (1.4, 1.5);
	\draw[thick, red] (1.6, -1.5) -- (1.6, 1.5);
	\node at (1.5, -1.8) {$\bi$}; 
	\node at (-0.3, 0) {$\bj$}; 
	\node at (1.5, 1.8) {$\bk$};
	\node at (3.3, 0) {$\bl$};
	\end{scope}
	\end{tikzpicture}
	\caption{\textbf{Left panel:} Fix $n \in \mathbb{N}$. At an intersection point, we have lines with colors in $\{1, \dots, n\}$ in each direction, with at most one line per color in each direction. Let $\bi, \bj, \bk, \bl \in \{0, 1\}^n$ denote the number of lines on the bottom, left, top and right directions, respectively, where the $m$-th coordinate of each vector records the number of lines with color $m$. \textbf{Right panel:} Take $n = 3$. Let red, blue, and orange denote the colors $1$, $2$, and $3$. We illustrate an example of the configuration with  $\bi = (1, 0, 0)$, $\bj = (0, 1, 1)$,  $\bk = (1, 1, 1)$, and $\bl = (0, 0, 0)$.}
	\label{fig:intersectionconfig}
\end{figure}

	\hypertarget{hyp:property1}{\textbf{Property 1}}
	(Color Ignorance): Lines with higher priority colors ignore those with lower priority colors. For instance, the lines with colors that belong to $\{1, \dots, m\}$ ignore the behavior of lines with colors greater than $m$. This means that if we sample the $n$-colored model and ignore the lines with colors greater than $m$, the remaining lines will behave as the $m$-colored model. Because of this, we can define the random variables $\{X_{m,n}, 0\leq m \leq n\}$ in a way that maintains ergodicity, as we will see in Section \ref{sec:subadditivity}.

	\hypertarget{hyp:property2}{\textbf{Property 2}} (Mod 2 Erasure): Fix arbitrary integers $1 \leq r_1 < \dots < r_m \leq n$. We can project the matrix $\LL^n$ to $\LL^m$ if we replace the colors in $\{r_{k-1}+1, \dots, r_k\}$ with color $k$ for each $k \in\{1, \dots, m\}$ and then erase every pair of lines with the same color traveling together. Because of this property, we can project the colored CS6V model down to the single-colored CS6V model. This will ensure that the random variables $\{X_{0,n}, n \geq 0\}$ will record the height function of the CS6V model, which is the quantity that we are interested in studying.

\hypertarget{hyp:property3}{\textbf{Property 3}} (Monotonicity of the Height Function): Suppose we have a sampling of the two-colored CS6V configuration on a rectangle $[1,x] \times [1,y]$ where we assign the vertex configurations in the rectangle using the matrix $\mathsf{L}^2$, and we allow only second color lines to enter from the left and bottom boundary. We can naturally extend the definition of $H(x,y)$ to apply to the case of non-empty boundary data by defining $H(x,y)$ as the sum of the number of lines entering the box $[1,x] \times [1,y]$ from the left boundary and the number of lines exiting the box $[1,x] \times [1,y]$ from the top boundary. Let $H^1(x,y)$ be the height function when we only consider lines of the first color. Let $H^2(x,y)$ be the height function of the projection of both colors to the single-colored model after we applied the procedure of mod $2$ erasure. Then $$H^1(x,y) \leq H^2(x,y).$$ In other words, adding a second color to the model does not decrease the height function. This property will be important for proving the superadditivity of the random variables $\{X_{m, n}, 0 \leq m \leq n\}$.
\subsubsection{Novelty of the paper} 
In \cite{drillick2022hydrodynamics}, we studied the $t$-PNG model, which is a one-parameter model. The construction of the $\mathsf{L}$-matrices therein 
comes from a sophisticated guess. The proof of stochasticity of $\{\mathsf{L}^n\}_{n \geq 1}$ and Properties \hyperlink{hyp:property1}{1}--\hyperlink{hyp:property3}{3} for the $t$-PNG model is then based on a case by case check. The CS6V model is a two-parameter model, so defining the colored CS6V model via guessing is more challenging. One novelty of the paper is that we use a Boolean-type product to define the $\mathsf{L}$-matrices, which significantly simplifies both the definition and the proof of the properties. This Boolean-type product construction can also be applied to give a simpler definition of the colored $t$-PNG model, see Appendix \ref{sec:Boolean-type product}. 

Other novelties come from working in a discrete setting and the inhomogeneity in the weights. For the $t$-PNG model, due to a scaling property, we only need to prove the strong law of large numbers in the diagonal direction. For the CS6V model, there is no similar scaling property, so we need to prove the strong law of large numbers in every direction. To this end, for each rational direction, we construct a colored model on the first quadrant and prove the strong law of large numbers in that direction. The convergence in all directions then follows from a density argument.

\subsubsection{Another possible approach} We want to remark that it is also possible to prove Theorem \ref{thm:main} without complementing the S6V model. Indeed,  \cite{benassi1987hydrodynamical, andjel2004law} prove a law of large numbers for a family of translation invariant exclusion processes. Their proof also relies on the superadditive ergodic theorem, but in a different way. An important ingredient is to couple the exclusion processes so that we can run them under the same random environment, starting at different times and initial data. The S6V model is also an exclusion-type interacting particle system, and to apply their idea, we need to find such a coupling for the S6V model. Another interesting question is to study the almost sure convergence of the S6V model to the limit shape under general initial data. Note that this result has been proved for a family of finite-range exclusion processes in \cite{bahadoran2010strong}. We leave both of these questions to future work.

\subsection*{Outline} In Section \ref{sec:Lmatrices}, we define a family of $\LL$-matrices using a Boolean-type product. Then we prove that the $\LL$-matrices are stochastic and satisfy Properties \hyperlink{hyp:property1}{1}--\hyperlink{hyp:property3}{3}. In Section \ref{sec:subadditivity}, we construct a colored CS6V model on the first quadrant using the $\LL$-matrices. We apply Liggett's superadditive ergodic theorem to the model and prove Theorem \ref{thm:maincs6v}.  	
	
	\subsection*{Acknowledgments}
	We thank Ivan Corwin for suggesting the question and for helpful comments on the paper. We thank Pablo Ferrari for helpful discussion. HD was supported by the National Science Foundation Graduate Research Fellowship under Grant No. DGE-1644869, as well as the W.M. Keck Foundation Science and Engineering grant on ``Extreme Diffusion" and the Fernholz Foundation's Minerva summer fellows program.
	\section{Definition of the $\mathsf{L}$-matrices}
	\label{sec:Lmatrices}
In this section, we first define the $\LL^1$-matrix, which encodes the weights of the usual single-colored CS6V model. Then, we introduce the Boolean-type product and use it to define the $\LL^n$-matrices. In the following, we take generic parameters $b_1, b_2 \in [0, 1]$.  
	\begin{definition}\label{def:Lmatrix}
		The matrix $\LL^1$ is indexed by a 4-tuple $i, j, k, l \in \{0, 1\}$, where $i, j, k, l$ denote the number of lines (either zero or one) on the bottom, left, top, and right of a vertex. We define
		\begin{align*}
		&\LL^1(1, 0; 1, 0) = 1, \qquad \LL^1(0, 1; 0, 1) = 1, \qquad \LL^1(0, 0; 0, 0) = b_2,\\
		&\LL^1(0, 0; 1, 1) = 1 - b_2, \qquad   \LL^1(1, 1; 1, 1) = b_1, \qquad  \LL^1(1, 1; 0, 0) = 1-b_1.
		\end{align*}
		For all other $i, j, k, l \in \{0, 1\}$, we set $\LL^1(i, j; k, l) = 0$. For fixed input lines $i, j \in \{0, 1\}$, $\LL^1(i, j; \cdot, \cdot)$ is a probability measure on the output lines.
	\end{definition}
	We proceed to give a closed form to the matrices $\{\LL^n\}_{n \geq 1}$. Let 
	\begin{equation*}
	W' := \{0,1,b_1, b_2, 1-b_1, 1-b_2\}
	\end{equation*} 
	denote the set of possible weights for the CS6V model. Note that we treat the elements in $W'$ as indeterminates, so the size of $W'$ does not depend on the values of $b_1$ and $b_2$. We define a product $*$ on the extended space
	\begin{equation}\label{eq:setW}
	W := \{0, 1, b_1, b_2, (1-b_1), (1-b_2), b_1 b_2, b_1 (1 - b_2), (1-b_1) b_2, (1-b_1) (1-b_2)\}.
	\end{equation}
	For $x, y \in W$, we define $[x]$ to be the set of factors of $x$. More precisely, $[x] = \{x\}$ if $x \in W'$, $[x] = \{b_1, b_2\}$ if $x = b_1 b_2$ and similarly for other values. We define 
	\begin{equation*}
	x * y := \begin{cases}
	0 &\qquad \text{ if } \{b_1, 1-b_1\}  \subseteq [x] \cup [y] \text{ or } \{b_2, 1-b_2\} \subseteq [x] \cup [y],\\ 
	\prod_{z \in [x] \cup [y]} z &\qquad \text{ otherwise.} 
	\end{cases}
	\end{equation*}
It is straightforward to check that $W$ is closed under $*$. Moreover, we readily obtain the following. 
	\begin{prop}\label{prop:distributivity}
	The product $*$ satisfies the following three properties.		
	\begin{enumerate}[leftmargin = 2em]
		\item  (commutativity) $a*b = b*a$, for $a, b \in W$.
		\item (associativity) $a * (b * c) = (a*b)*c$, for $a, b, c \in W$.
		\item (distributivity) Suppose that $a_1, ..., a_n \in W$ such that $\sum_{i=1}^n a_i \in W$. For any $c \in W$, 
		\begin{equation*}
		\sum_{i=1}^n c * a_i= c * \sum_{i=1}^n a_i.
		\end{equation*}
	\end{enumerate}	
\end{prop}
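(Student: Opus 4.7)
The plan is to encode $(W, *)$ combinatorially via the set of four ``atoms''
\[
\mathcal{B} = \{b_1 b_2,\ b_1(1-b_2),\ (1-b_1)b_2,\ (1-b_1)(1-b_2)\}.
\]
For $x \in W$, define $\alpha(x) \subseteq \mathcal{B}$ by setting $\alpha(0) = \emptyset$ and, for $x \neq 0$,
\[
\alpha(x) = \{\omega \in \mathcal{B} : \text{every element of } [x] \setminus \{1\} \text{ is a factor of } \omega\}.
\]
A direct inspection (for each of the ten elements of $W$) gives $x = \sum_{\omega \in \alpha(x)} \omega$ as a polynomial in $b_1, b_2$; for example $\alpha(1) = \mathcal{B}$, $\alpha(b_1) = \{b_1 b_2, b_1(1-b_2)\}$, and $\alpha(b_1 b_2) = \{b_1 b_2\}$. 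In particular, $\alpha$ is injective.

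The central step I would establish is that $*$ corresponds to intersection: for all $x, y \in W$,
\[
\alpha(x * y) = \alpha(x) \cap \alpha(y).
\]
The zero condition ``$\{b_k, 1-b_k\} \subseteq [x] \cup [y]$ for some $k \in \{1,2\}$'' is equivalent to the statement that no atom of $\mathcal{B}$ is simultaneously divisible by every element of $([x] \cup [y]) \setminus \{1\}$, since an atom contains exactly one of $\{b_k, 1-b_k\}$ as a factor for each $k$; this matches $\alpha(x) \cap \alpha(y) = \emptyset$. In the non-degenerate case one has $[x * y] \setminus \{1\} = ([x] \cup [y]) \setminus \{1\}$, so the defining description of $\alpha$ yields the claimed identity.

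With this identification in hand, commutativity and associativity of $*$ reduce to the commutativity and associativity of $\cap$ on $2^{\mathcal{B}}$. For distributivity, the hypothesis $\sum_i a_i = x$ in $W$, read as a polynomial identity in $b_1, b_2$, forces each atom $\omega \in \mathcal{B}$ to belong to the same number of the subsets $\alpha(a_i)$ as it does to $\alpha(x)$; since each $\alpha(a_i)$ is a subset of $\mathcal{B}$, this means the nonempty $\alpha(a_i)$'s form a disjoint partition of $\alpha(x)$. Intersecting with $\alpha(c)$ preserves this disjointness, so
\begin{align*}
c * x \ =\ \sum_{\omega \in \alpha(c) \cap \alpha(x)} \omega \ =\ \sum_i \sum_{\omega \in \alpha(c) \cap \alpha(a_i)} \omega \ =\ \sum_i c * a_i.
\end{align*}

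The main obstacle is the core lemma identifying $*$ with atomic intersection; once that is in place, the three properties are essentially formal. Some care is needed around the edge cases $x \in \{0,1\}$: the paper's conventions $[0] = \{0\}$ and $[1] = \{1\}$ must be reconciled with the atomic picture (the factor $1$ is harmless both in the product formula and in $\alpha$, while $0$ acts absorbingly since no atom is divisible by $0$), and a short check confirms consistency.
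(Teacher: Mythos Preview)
Your proposal is correct. The paper does not actually supply a proof of this proposition; it simply asserts that the three properties are ``readily obtained'' from the definition, leaving the verification implicit.

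Your route is genuinely different and more conceptual than a bare case check. By mapping $W$ into $2^{\mathcal B}$ via the atom map $\alpha$ and identifying $*$ with $\cap$, you reduce commutativity and associativity to trivialities of set intersection, and you reduce distributivity to the linear independence of the four atoms $b_1 b_2,\ b_1(1-b_2),\ (1-b_1)b_2,\ (1-b_1)(1-b_2)$ as polynomials in $b_1,b_2$ (which is immediate, e.g.\ by evaluating at the four points of $\{0,1\}^2$). This buys a clean structural explanation of \emph{why} the Boolean-type product behaves well, whereas the paper's implicit approach would presumably be a finite inspection of cases. One small remark: your associativity argument uses that $a*b\in W$ so that $\alpha(a*b)$ is defined; the paper states closure of $W$ under $*$ separately, and in your framework it is equivalent to the (easily checked) fact that the ten subsets in the image of $\alpha$ are closed under intersection. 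Your handling of the edge cases $0$ and $1$ is consistent with the paper's conventions $[0]=\{0\}$ and $[1]=\{1\}$.
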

Alternatively, we can also define $*$ first on $W'$ and extend it to $W$. For $x, y \in W'$, we define $x * x = x$, and $x * y = 0$ if $\{x, y\} = \{b_1, 1-b_1\}$ or $\{b_2, 1-b_2\}$. This is why we call $*$ a Boolean-type product. For other choices of $x, y \in W'$, we define $x* y = xy$. We then extend the product $*$ from $W'$ to $W$ using the commutative and associative properties.

We proceed to define the $\LL^n$-matrices, using $W$ as the set of possible weights. For $\mathbf{x} = (x_1, \dots, x_n) \in \{0, 1\}^n$ and $r \in \{1, \dots, n\}$, we define the \textbf{$r$-fold projection} 
	\begin{equation*}
	\mfs_r (\mathbf{x}) = \Big(\sum_{m=1}^r x_m\Big) \mod 2.
	\end{equation*}
	As an example, consider the vector $\mathbf{x} = (1,0,1,1).$ We have \begin{align*}
	\mfs_1 (\mathbf{x}) =1, \ \  \mfs_2 (\mathbf{x})  =1, \ \ \mfs_3 (\mathbf{x})=0, \ \ \mfs_4 (\mathbf{x}) = 1.
	\end{align*}
	\begin{definition}\label{def:nLmatrix}
		Fix arbitrary $n \in \Z_{\geq 1}$. The matrix $\LL^n$ is indexed by a 4-tuple $\bi, \bj, \bk, \bl \in \{0, 1\}^n$, where $\bi, \bj, \bk, \bl$ denote the number of lines (either zero or one) of each color in $\{1, \dots, n\}$ on the bottom, left, top, and right of an intersection, respectively. We define the matrix $\LL^n$ via 
		\begin{equation}\label{eq:weight}
		\LL^n(\bi, \bj; \bk, \bl) := \sideset{}{^*}\prod_{r \in \{1, \dots, n\}} \LL^1 (\mfs_r (\bi), \mfs_r (\bj); \mfs_r (\bk), \mfs_r (\bl)),
		\end{equation}
where $\prod^*$ denotes the Boolean-type product of multiple terms under $*$.
	\end{definition}
	\subsection{Properties of $\LL^n$}
	We proceed to state and prove that the matrices $\LL^n$ are stochastic and that they satisfy the properties mentioned above. The key to the proofs of stochasticity, color ignorance, and mod 2 erasure is the distributivity of the Boolean-type product, which allows us to manipulate the sums and products as we normally would. 
	\begin{prop}[Stochasticity] \label{prop:stochasticity}
		$\LL^n$ is a stochastic matrix, i.e., the entries of $\LL^n$ are non-negative, and for any $\bi, \bj \in \{0, 1\}^n$, 
		\begin{equation*}
		\sum_{\bk, \bl \in \{0, 1\}^n} \LL^n(\bi, \bj; \bk, \bl) = 1.
		\end{equation*}
	\end{prop}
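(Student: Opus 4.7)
The plan is to reduce stochasticity of $\LL^n$ to stochasticity of $\LL^1$ by exploiting the multiplicative structure of Definition~\ref{def:nLmatrix} together with the distributivity from Proposition~\ref{prop:distributivity}. Non-negativity is essentially immediate: every matrix element of $\LL^1$ lies in $[0,1]$, and by the case analysis of the Boolean-type product $*$, a Boolean-type product of non-negative reals is again a non-negative real (it is either $0$ or a literal product of the factors). So each entry $\LL^n(\bi,\bj;\bk,\bl)$ is non-negative.

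For the sum-to-one identity, the key observation is that the $n$-fold projection gives a bijection
\[
\{0,1\}^n \longrightarrow \{0,1\}^n, \qquad \bk \longmapsto (\mfs_1(\bk),\dots,\mfs_n(\bk)),
\]
with inverse $k_1 = \mfs_1(\bk)$ and $k_r = \mfs_r(\bk) - \mfs_{r-1}(\bk) \pmod 2$ for $r \geq 2$. Writing $k'_r := \mfs_r(\bk)$ and $l'_r := \mfs_r(\bl)$, the sum over $\bk, \bl \in \{0,1\}^n$ therefore equals the sum over $(k'_1, \dots, k'_n), (l'_1, \dots, l'_n) \in \{0,1\}^n$, and
\[
\sum_{\bk,\bl} \LL^n(\bi,\bj;\bk,\bl) \;=\; \sum_{k'_1,\dots,k'_n,\, l'_1,\dots,l'_n \in \{0,1\}} \prods_{r=1}^{n} \LL^1(\mfs_r(\bi),\mfs_r(\bj);k'_r,l'_r).
\]

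Now I would peel off the sums one index at a time using the distributivity property (3) in Proposition~\ref{prop:distributivity}. For the innermost sum in $k'_n, l'_n$, the remaining Boolean-type product over $r < n$ plays the role of the fixed element $c \in W$, while the four summands $\LL^1(\mfs_n(\bi), \mfs_n(\bj); k'_n, l'_n)$ play the role of $a_i \in W$; their sum is $1 \in W$ by stochasticity of $\LL^1$ (Definition~\ref{def:Lmatrix}), so distributivity applies. Pulling this factor of $1$ out and iterating, at the $r$-th step the required hypothesis $\sum_i a_i \in W$ holds because the inner sum is always $1 \in W$. After $n$ iterations I am left with
\[
\sum_{\bk,\bl} \LL^n(\bi,\bj;\bk,\bl) \;=\; \prods_{r=1}^{n} \Big(\sum_{k'_r,l'_r \in \{0,1\}} \LL^1(\mfs_r(\bi),\mfs_r(\bj);k'_r,l'_r)\Big) \;=\; \prods_{r=1}^{n} 1 \;=\; 1,
\]
where the last equality uses $1 * 1 = 1$ from the definition of $*$ on $W'$.

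The only delicate point is verifying that distributivity can legitimately be applied at each iteration, which amounts to checking that every partial sum that is pulled out lies in $W$; this is immediate here because each such partial sum equals $1$. Thus I expect no real obstacle beyond carefully setting up the reindexing via the bijection $\bk \leftrightarrow (\mfs_r(\bk))_r$ and being explicit that the distributivity hypothesis is satisfied at every stage.
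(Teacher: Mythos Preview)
Your proposal is correct and takes essentially the same approach as the paper: both arguments reduce stochasticity of $\LL^n$ to that of $\LL^1$ by peeling off one coordinate at a time via the distributivity of the Boolean-type product. The paper packages the peeling as an induction on $n$ using the factorization $\LL^n = \LL^{n-1} * \LL^1(\mfs_n(\bi),\mfs_n(\bj);\mfs_n(\bk),\mfs_n(\bl))$, whereas you make the reindexing bijection $\bk \mapsto (\mfs_r(\bk))_r$ explicit and iterate directly, but the content is the same.
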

	
	\begin{proof}
		We prove this by induction. The case $n=1$ follows from examination. Suppose that $\LL^{n-1}$ is stochastic. Using the definition for $\LL^n$, we can factor it as follows:
		\begin{equation}\label{eq:product}
		\LL^n(\bi, \bj, \bk, \bl) = \LL^{n-1}(\bi_{[1, n-1]}, \bj_{[1,n-1]}, \bk_{[1,n-1]}, \bl_{[1,n-1]}) * \LL^1(\mfs_n (\bi), \mfs_n (\bj); \mfs_n (\bk), \mfs_n (\bl)).
		\end{equation} Note that $(\mfs_n(\bk), \mfs_n (\bl))$ equals each element of $\{0, 1\}^2$ exactly once when we vary $\bk, \bl$ under the restriction $\bk_{[1, n-1]} = \mfk$ and $\bl_{[1, n-1]} = \mfl$. Applying this and Proposition \ref{prop:distributivity} to the right-hand side of \eqref{eq:product} yields
		\begin{align*}
		\sum_{\bk, \bl \in \{0, 1\}^n} \LL^n(\bi, \bj, \bk, \bl)
		= \Big(\sum_{\mfk, \mfl \in \{0, 1\}^{n-1}} \LL^{n-1}(\bi_{[1, n-1]}, \bj_{[1,n-1]}; \mfk, \mfl)\Big) * \sum_{k,l \in \{0,1\}} \LL^1(\mfs_n (\bi), \mfs_n (\bj); k,l) =1.
		\end{align*}
		The last equality follows from the stochasticity of both $\LL^{n-1}$ and $\LL^1$.  
	\end{proof}
	\begin{prop}[Color Ignorance]\label{prop:colorignore}
		Fix $m \in \{1, \dots, n\}$ and 
		$\mfi, \mfj, \mfk, \mfl \in \{0, 1\}^m$. For all $\bi, \bj \in \{0, 1\}^n$ such that $\bi_{[1, m]} = \mfi$ and $\bj_{[1, m]} = \mfj$, we have 
		\begin{equation}\label{eq:property1}
		\sum_{\substack{\bk_{[1, m]} = \mfk, \\ \bl_{[1, m]} = \mfl}} \LL^n(\bi, \bj; \bk, \bl) = \LL^m(\mfi, \mfj; \mfk, \mfl).
		\end{equation}
	\end{prop}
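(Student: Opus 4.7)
The plan is to mimic the argument for stochasticity (Proposition \ref{prop:stochasticity}), but integrating out only the coordinates of $\bk, \bl$ with indices in $\{m+1, \dots, n\}$. We proceed by induction on $n \geq m$, with base case $n = m$ being tautological since both sides equal $\LL^m(\mfi,\mfj;\mfk,\mfl)$.

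For the inductive step, suppose \eqref{eq:property1} holds at level $n-1$, and fix $\bi, \bj \in \{0,1\}^n$ with $\bi_{[1,m]} = \mfi$, $\bj_{[1,m]} = \mfj$. Using the same factorization as in \eqref{eq:product}, we write
\begin{equation*}
\LL^n(\bi,\bj;\bk,\bl) \;=\; \LL^{n-1}(\bi_{[1,n-1]}, \bj_{[1,n-1]}; \bk_{[1,n-1]}, \bl_{[1,n-1]}) \,*\, \LL^1(\mfs_n(\bi), \mfs_n(\bj); \mfs_n(\bk), \mfs_n(\bl)).
\end{equation*}
Splitting the summation over $\bk, \bl$ into a summation over $\bk_{[m+1, n-1]}, \bl_{[m+1, n-1]}$ followed by a summation over the last coordinate $(\bk_n, \bl_n)$, I first fix $\bk_{[1,n-1]}, \bl_{[1,n-1]}$ and sum over $(\bk_n, \bl_n) \in \{0,1\}^2$. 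As in the stochasticity proof, the map $(\bk_n, \bl_n) \mapsto (\mfs_n(\bk), \mfs_n(\bl))$ is a bijection onto $\{0,1\}^2$, so by the distributivity property in Proposition \ref{prop:distributivity} (whose hypothesis is satisfied since the partial sum equals $1 \in W$) together with stochasticity of $\LL^1$, the inner sum collapses to
\begin{equation*}
\LL^{n-1}(\bi_{[1,n-1]}, \bj_{[1,n-1]}; \bk_{[1,n-1]}, \bl_{[1,n-1]}) \,*\, 1 \;=\; \LL^{n-1}(\bi_{[1,n-1]}, \bj_{[1,n-1]}; \bk_{[1,n-1]}, \bl_{[1,n-1]}),
\end{equation*}
using that $1$ acts as identity under $*$ (since $[1] = \{1\}$ and $1 \notin \{b_1, 1-b_1, b_2, 1-b_2\}$).

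Summing what remains over $\bk_{[m+1, n-1]}, \bl_{[m+1, n-1]}$ with $\bk_{[1,m]} = \mfk$, $\bl_{[1,m]} = \mfl$ now matches exactly the left-hand side of \eqref{eq:property1} at level $n-1$, and the inductive hypothesis identifies it with $\LL^m(\mfi, \mfj; \mfk, \mfl)$. The only delicate point, and the step I expect to be most prone to error, is verifying that the distributivity rule applies at each stage: one must confirm that the sum of $\LL^1$-weights being factored out is indeed an element of $W$, but this is exactly the content of stochasticity, so the algebraic bookkeeping stays clean throughout.
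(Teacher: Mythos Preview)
Your proof is correct and rests on the same ingredients as the paper's: the factorization of $\LL^n$ via the Boolean product, distributivity, and stochasticity. The only difference is organizational: the paper factors $\LL^n$ directly as $\LL^m(\mfi,\mfj;\mfk,\mfl) * \prods_{r=m+1}^n \LL^1(\mfs_r(\bi),\mfs_r(\bj);\mfs_r(\bk),\mfs_r(\bl))$, identifies the second factor as an $\LL^{n-m}$-weight in shifted variables, and applies stochasticity of $\LL^{n-m}$ once to collapse the sum to $1$; you instead peel off one coordinate at a time by induction and invoke stochasticity of $\LL^1$ at each step. Both are equally valid, with the paper's version being marginally more direct since it avoids the induction scaffold.
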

	
	\begin{proof}
		By Definition \ref{def:nLmatrix}, we have
		\begin{align*}
		\sum_{\substack{\bk_{[1, m]} = \mfk, \\ \bl_{[1, m]} = \mfl}} \LL^n(\bi, \bj; \bk, \bl) = \LL^m(\mfi, \mfj; \mfk, \mfl) \sum_{\substack{\bk_{[1, m]} = \mfk, \\ \bl_{[1, m]} = \mfl}} \ \prods_{i=m+1}^n \LL^1(\mfs_i (\bi), \mfs_i (\bj); \mfs_i (\bk), \mfs_i (\bl)).
		\end{align*}
		The last product can be rewritten as $\LL^{n-m} (i, j, k, \ell)$ where $i = (\mfs_{m+1} (\bi), \cdots, \mfs_{n}(\bi))$, and where $j, k$, and $\ell$ are defined similarly. Summing $\LL^{n-m} (i, j, k, \ell)$ over $\mathbf{k}_{[1, m ]} = \mathfrak{k}$ and $\mathbf{l}_{[1, m]} = \mathfrak{l}$ reduces to summing the same object over $k, \ell \in \{0, 1\}^{n-m}$, which equals $1$ due to stochasticity. This concludes the result.
	\end{proof}
We call  $\pi$ a partition of $\{1, \dots, n\}$ if it takes the form of $$\pi = \big\{\{1, \dots, r_1\}, \{r_1 + 1, \dots, r_2\}, \dots, \{r_{m-1} + 1, \dots, r_m\}\big\}$$ for some $m \leq n$ and  $1 = r_1 < r_2 < \dots < r_m  = n$. We define a projection map $g_\pi: \{0, 1\}^n \to \{0, 1\}^m$ such that 
	\begin{equation*}
	g_{\pi} (x_1, \dots, x_n) = \bigg(\Big(\sum_{i = r_{k-1}+1}^{r_k} x_i\Big) \text{ mod } 2\bigg)_{k = 1}^m.
	\end{equation*}
	We define $\ell(\pi) = m$ to be the length of the partition $\pi$.
	\begin{prop}[Mod 2 Erasure]
		\label{prop:mod2erasure}
		Fix a partition $\pi$ of $\{1, \dots, n\}$ such that $\ell(\pi) = m$. Fix $\mfi, \mfj, \mfk, \mfl \in \{0, 1\}^m$. For all $\bi, \bj \in \{0, 1\}^n$ satisfying $g_{\pi} (\bi) = \mfi$ and $g_{\pi} (\bj) = \mfj$, we have 
		\begin{equation*}
		\sum_{\substack{g_\pi(\bk) = \mfk \\
				g_\pi(\bl) = \mfl}} \LL^n(\bi, \bj; \bk, \bl) = \LL^m(\mfi, \mfj; \mfk, \mfl).
		\end{equation*}
	\end{prop}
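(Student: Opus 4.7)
The plan is to induct on $n$, with the base case $n = 1$ being immediate since the only partition is $\pi = \{\{1\}\}$ and $g_\pi$ is the identity. For the inductive step I will peel off the $n$-th coordinate using the factorization
\[
\LL^n(\bi, \bj; \bk, \bl) = \LL^{n-1}(\bi_{[1, n-1]}, \bj_{[1, n-1]}; \bk_{[1, n-1]}, \bl_{[1, n-1]}) * \LL^1(\mfs_n(\bi), \mfs_n(\bj); \mfs_n(\bk), \mfs_n(\bl))
\]
from the proof of Proposition \ref{prop:stochasticity}, and then apply the inductive hypothesis to a suitable partition $\pi'$ of $\{1, \dots, n-1\}$.

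The key simplification is that under the constraint $g_\pi(\bk) = \mfk$, the total parity satisfies $\mfs_n(\bk) = \sum_{r=1}^n \bk_r \equiv \sum_{q=1}^m \mfk_q = \mfs_m(\mfk) \pmod{2}$, and analogously for $\bl, \bi, \bj$. Hence the trailing $\LL^1$ factor is constant across the entire sum, equal to $\LL^1(\mfs_m(\mfi), \mfs_m(\mfj); \mfs_m(\mfk), \mfs_m(\mfl))$, and can be pulled out using the distributivity of $*$ (Proposition \ref{prop:distributivity}).

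The residual sum splits into two subcases based on the size of the last block $B_m = \{r_{m-1}+1, \dots, n\}$ of $\pi$. If $B_m = \{n\}$, let $\pi'$ be the restriction of $\pi$ to $\{1, \dots, n-1\}$ (which has $m-1$ blocks); then $\bk_n = \mfk_m$ and $\bl_n = \mfl_m$ are forced and the remaining inner sum is exactly the object addressed by the inductive hypothesis on $\pi'$, producing $\LL^{m-1}(\mfi_{[1, m-1]}, \mfj_{[1, m-1]}; \mfk_{[1, m-1]}, \mfl_{[1, m-1]})$. Multiplying by the pulled-out factor and applying Definition \ref{def:nLmatrix} recovers $\LL^m(\mfi, \mfj; \mfk, \mfl)$. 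If instead $|B_m| \geq 2$, let $\pi' = \{B_1, \dots, B_{m-1}, B_m \setminus \{n\}\}$ (still $m$ blocks); then $g_\pi(\bk) = \mfk$ translates to $g_{\pi'}(\bk_{[1, n-1]}) = (\mfk_{[1, m-1]}, \mfk_m - \bk_n \bmod 2)$, and analogously for $\bl$. Applying the inductive hypothesis to $\pi'$, the inner sum becomes
\[
\sum_{\bk_n, \bl_n \in \{0, 1\}} \LL^m\big((\mfi_{[1, m-1]}, \mfi_m - \bi_n), (\mfj_{[1, m-1]}, \mfj_m - \bj_n); (\mfk_{[1, m-1]}, \mfk_m - \bk_n), (\mfl_{[1, m-1]}, \mfl_m - \bl_n)\big).
\]
Expanding each summand via Definition \ref{def:nLmatrix} and using distributivity, the remaining sum over $\bk_n, \bl_n$ of the trailing $\LL^1$ factor ranges over all of $\{0, 1\}^2$ in its output and hence equals $1$ by stochasticity (Proposition \ref{prop:stochasticity}); this collapses the expression to $\LL^{m-1}(\mfi_{[1, m-1]}, \mfj_{[1, m-1]}; \mfk_{[1, m-1]}, \mfl_{[1, m-1]})$, and combining with the pulled-out factor yields $\LL^m(\mfi, \mfj; \mfk, \mfl)$ once more.

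The main obstacle is the careful bookkeeping of the partial parities $\mfs_r$ under the partition map $g_\pi$, together with the repeated verification that each intermediate sum lies in $W$ so that distributivity is legitimately applicable; both points are handled by applying the inductive hypothesis to entries of stochastic $\LL$-matrices, which are elements of $W$ by construction.
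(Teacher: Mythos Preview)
Your inductive argument is correct, but it follows a longer route than the paper's proof.  The paper argues in a single step: in the $*$-product defining $\LL^n$, the factors indexed by $r \in A := \{r_1, \dots, r_m\}$ satisfy $\mfs_{r_k}(\bi) = \mfs_k(\mfi)$ (and likewise for $\bj, \bk, \bl$) whenever $g_\pi(\bi) = \mfi$, etc., so their $*$-product is exactly $\LL^m(\mfi, \mfj; \mfk, \mfl)$ and is constant over the summation.  The remaining $*$-product over $r \notin A$ is reinterpreted as a single $\LL^{n-m}$ weight in auxiliary coordinates $i,j,k,\ell$; summing over $\bk, \bl$ with $g_\pi(\bk) = \mfk$, $g_\pi(\bl) = \mfl$ amounts to summing this $\LL^{n-m}$ over all of its outputs, which is $1$ by stochasticity.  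One application of distributivity (Proposition~\ref{prop:distributivity}) then finishes.

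Your approach instead peels off only the $n$-th coordinate and recurses, which forces a case split on whether the last block of $\pi$ has size $1$ or more, followed by a further expansion of $\LL^m$ to collapse the residual sum over $\bk_n, \bl_n$.  This is sound---each distributivity step is legitimate because the inductive hypothesis certifies the relevant partial sum lies in $W$---but it is noticeably more work.  The paper's trick of singling out the block-endpoint indices $r_1, \dots, r_m$ all at once (rather than the trailing index $n$) is precisely what eliminates the case analysis and the nested induction.
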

	
	\begin{proof}
		Define $A = \{r_1, ..., r_m\}$. We define $i, j$ to be the unique $n-m$ dimensional vectors such that  
		\begin{align*}
		(\mfs_1 (i), \cdots, \mfs_{n-m} (i)) &= (\mfs_{1} (\bi), \cdots, \widehat{\mfs}_{r_1}(\bi), \cdots,  \widehat{\mfs}_{r_m}(\bi), \cdots, \mfs_{n}(\bi)),\\
		(\mfs_1 (j), \cdots, \mfs_{n-m} (j)) &= (\mfs_{1} (\bj), \cdots, \widehat{\mfs}_{r_1}(\bj), \cdots,  \widehat{\mfs}_{r_m}(\bj), \cdots, \mfs_{n}(\bj)), 
		\end{align*}
		where $\widehat{x}$ denotes the removal of $x$ in the vector. We have
		\begin{equation}\label{eq:mod2}
		\sum_{\substack{g_\pi(\bk) = \mfk \\
				g_\pi(\bl) = \mfl}} \LL^n(\bi, \bj; \bk, \bl) =  \sum_{\substack{g_\pi(\bk) = \mfk \\
				g_\pi(\bl) = \mfl}} \prods_{i \in A} \LL^1(\mfs_i (\bi), \mfs_i (\bj); \mfs_i (\bk), \mfs_i (\bl))\prods_{i \notin A, i \leq n} \LL^1(\mfs_i (\bi), \mfs_i (\bj); \mfs_i (\bk), \mfs_i (\bl)). 
		\end{equation}
		Since $g_\pi(\bk) = \mfk$ and $g_\pi(\bl) = \mfl$, we have $\prods_{i \in A} \LL^1(\mfs_i (\bi), \mfs_i (\bj); \mfs_i (\bk), \mfs_i (\bl)) = \LL^m(\mfi, \mfj; \mfk, \mfl)$. Using this and Proposition \ref{prop:distributivity}, the right-hand side of \eqref{eq:mod2} can be rewritten as
		\begin{align*}
		\LL^m(\mfi, \mfj; \mfk, \mfl)  \sum_{\substack{g_\pi(\bk) = \mfk \\
				g_\pi(\bl) = \mfl}}\, \prods_{i \notin A, i \leq n} \LL^1(\mfs_i (\bi), \mfs_i (\bj); \mfs_i (\bk), \mfs_i (\bl)) =   \LL^m(\mfi, \mfj; \mfk, \mfl)  \sum_{k,l \in \{0,1\}^{n-m}}\LL^{n-m}(i,j,k,l) = \LL^m(\mfi, \mfj; \mfk, \mfl).
		\end{align*}
		The first equality is due to \eqref{eq:weight}. The last equality follows from the stochasticity of $\LL^{n-m}.$
	\end{proof}

	The next proposition implies that if we start with a CS6V model on $[1, x] \times [1, y]$ with empty boundary data and wish to add some boundary data (i.e., lines entering the box from the bottom and the left), then we can use the colored model to sample the CS6V model with the boundary data in a way so that the height function at $(x,y)$ cannot decrease with the addition of the boundary data.

	\begin{prop}[Monotonicity of the height function]\label{prop:monotonicity} 
Suppose we have a two-colored CS6V model on $[1, x] \times [1, y]$, where the vertex configurations are assigned using the matrix $\mathsf{L}^2$, and the only lines entering from the bottom and left boundaries are lines of the second color. Let $H^1(x,y)$ denote the height function when we only consider lines of the first color (i.e., the lines of the $1$-fold projection). Let $H^2(x,y)$ denote the height function of the $2$-fold projection of both colors to the single-colored model. Then 
		$$H^1(x,y) \leq H^2(x,y).$$ 
	\end{prop}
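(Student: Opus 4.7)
The plan is first to translate the inequality into a concrete combinatorial form. Using the extended definition $H = L + T$ (left-entries plus top-exits), together with the assumption that color-1 has empty boundary while color-2 may enter from the left and bottom, a direct computation gives
\[
H^2(x,y) - H^1(x,y) = L_2 + T_2 - 2N_{12},
\]
where $L_2$ counts color-2 lines entering from the left boundary in rows $[1,y]$, $T_2$ counts color-2 top-exits from row $y$ in columns $[1,x]$, and $N_{12}$ counts columns at which both colors exit the top together. The goal thus reduces to proving $L_2 + T_2 \geq 2 N_{12}$; I aim for the stronger bound $N_{12} \leq L_2$, which together with the trivial $N_{12} \leq T_2$ immediately gives the inequality.

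The key structural input, which I plan to extract first, is that \emph{color-2 cannot nucleate} inside the rectangle. A short case check on the Boolean-product definition of $\mathsf{L}^2$ shows that any attempted vertex transition with $(i_2,j_2) = (0,0)$ and $(k_2,l_2) = (1,1)$ forces the two $\mathsf{L}^1$ factors in \eqref{eq:weight}---one for color-1 and one for the projection---to combine a forbidden pair of the form $b_c * (1 - b_c)$, so the resulting $\mathsf{L}^2$ weight vanishes. Consequently every color-2 line in the configuration traces back through a sequence of valid transitions to a unique boundary entry.

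Equipped with this, I will prove $N_{12} \leq L_2$ by constructing an injection from overlap top-exits into color-2 left-entries. For each overlap top-exit at column $u$, trace color-1 downward along column $u$ to the unique color-1 nucleation at some vertex $(u,v)$, which must exist since color-1 has empty boundary. The local $\mathsf{L}^2$ constraints imply that at such a color-1 nucleation vertex, any color-2 line arriving from below deflects horizontally instead of continuing upward (indeed, with $(i_1,j_1) = (0,0)$ and color-1 type VI, the only non-zero $\mathsf{L}^2$ entries with $i_2 = 1$ have $k_2 = 0$). Therefore the color-2 line sharing the overlap top-exit at column $u$ must have entered column $u$ from the left at or above $(u,v)$, and this uniquely associates the overlap to a color-2 left-boundary entry. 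Consistency across the configuration---in particular at type III color-2 crossings---is handled by the fact that relabeling the two color-2 lines at a crossing does not alter the observed configuration.

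The main obstacle will be packaging the local ``deflection'' observation into a global injection. The delicate case is when several color-2 lines traverse the same column via multiple turns, type III crossings, and type IV sinks, so the assignment of overlap top-exits to left-entries is not uniquely determined. A clean way to close this step is a max-flow / König-Hall argument on the bipartite graph whose vertices are the overlap top-exits and color-2 left-entries, with the $\mathsf{L}^2$ local constraints used to verify Hall's condition. An alternative, perhaps simpler, route is an inductive construction that processes rows from bottom to top while maintaining as an invariant the current matching together with the partial inequality $N_{12}(x,y') \leq L_2(y')$ at every sub-row $y' \leq y$.
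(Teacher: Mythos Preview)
Your reduction to $H^2-H^1 = L_2 + T_2 - 2N_{12}$ is correct, and your key structural observation---that color $2$ cannot nucleate under $\mathsf L^2$ because such a vertex would force a factor $b_c*(1-b_c)=0$---is exactly the local input the paper uses. However, the route you take from there has a real gap. First, your deflection claim is stated too strongly: at a color-$1$ nucleation vertex $(i_1,j_1)=(0,0)$, $(k_1,l_1)=(1,1)$, the conclusion $k_2=0$ when $i_2=1$ only holds if additionally $j_2=0$; when $j_2=1$ the configuration $(k_2,l_2)=(1,1)$ has weight $(1-b_2)*(1-b_1)\neq 0$, so color $2$ can continue upward. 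Second, and more seriously, even granting the corrected local statement, your argument only shows that the color-$2$ line at an overlap top-exit in column $u$ entered column $u$ horizontally from column $u-1$; it does not trace that line back to the \emph{left boundary} of the rectangle, so no injection into the $L_2$ entries is actually produced. You recognise this obstacle and defer to a Hall/max-flow or row-induction argument, but neither is carried out, and it is not clear either would close cleanly.

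The paper's proof avoids the injection entirely by exploiting a symmetry you do not use: the height function admits a second expression $H^2 = B_4 + (B_1 - B_2 + B_3)$ via bottom-entries plus right-exits, by conservation in the single-colored projection. Summing your non-nucleation inequality over the whole box gives $A_4+B_4 \ge (A_2+A_3)+(B_2+B_3)$, so at least one of $A_4\ge A_2+A_3$ or $B_4\ge B_2+B_3$ holds, and either one, plugged into the corresponding formula for $H^2-H^1$, immediately yields $H^2\ge H^1$. This is a three-line counting argument with no path-tracing. (Incidentally, combining the two formulas with the non-nucleation inequality also shows algebraically that your stronger target $N_{12}\le L_2$ is in fact true---but the paper neither needs nor proves it, and your combinatorial route to it remains incomplete.)
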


\begin{proof}
Consider the rectangle $[1, x] \times [1, y]$.	Let $A_1$ (resp. $B_1$) be the number of first-color lines leaving the top (resp. right) boundary. Let $A_2$ (resp. $B_2$) be the number of second-color lines that leave the same boundary and are accompanied by a first-color line. Let $A_3$ (resp. $B_3$) be the number of second-color lines that leave the same boundary without accompanying first-color lines. Let $A_4$ (resp. $B_4$) be the number of second-color lines entering the left (resp. bottom) boundary. We have
		\begin{align*}
		H^1 (x, y) = A_1 = B_1, \qquad
		H^2 (x, y) = A_1 - A_2 + A_3 + A_4 = B_1  -B_2 + B_3 + B_4.
		\end{align*}
By examining all possible two-color configurations in Figure \ref{fig:twocolor}, we know that for a vertex configuration, the number of second-color lines entering from the left and bottom is no smaller than the number of lines exiting above and to the right. This implies that the number of second-color lines entering the rectangle from the left and bottom boundaries is no smaller than the number of second-color lines exiting the top and right boundaries. Hence, $A_4 + B_4 \geq A_2 + A_3 + B_2 + B_3$. Hence, we have either $A_4 \geq A_2 + A_3$ or $B_4 \geq B_2 +B_3$. Assume without loss of generality that the first inequality holds. Then we have
		\begin{equation*}
		    H^1(x, y) = A_1 \leq A_1 - A_2 + A_3 + A_4 = H^2 (x, y). \qedhere
		\end{equation*}
	\end{proof}
	
	\begin{rmk}
		It is possible to define a colored model in the same way as in Definition \ref{def:nLmatrix}, using ordinary multiplication instead of the Boolean-type product. This will still yield a model with the properties of color ignorance and mod 2 erasure. However, with this definition, the colored model will no longer satisfy the monotonicity of the height function. To see why this is, note that with the Boolean definition, we cannot have the configurations in Figure \ref{fig:nucleation}. This is because these configurations have weights $b_1 * (1-b_1) = 0$ and $b_2 * (1-b_2) = 0$. However, with the ordinary product, these configurations would have non-zero weights. 
  As demonstrated in Figure \ref{fig:badConfig}, these configurations will yield samplings that fail the monotonicity of the height function.  
  
		\begin{figure}[ht]
			\centering
			\begin{tikzpicture}
			\begin{scope}[xshift = 0cm]
			\draw[thick,blue] (0.05, 0.5) -- (0.05, 0.05) -- (0.5, 0.05); \draw[thick,red] (-0.05, 0.5) -- (-0.05, -0.5);
			\draw[thick,red] (-0.5, -0.05) -- (0.5, -0.05);
			\end{scope}
			\begin{scope}[xshift = 2cm]
			\draw[thick,blue] (0.05, 0.5) -- (0.05, 0.05) -- (0.5, 0.05); \draw[thick,red] (-0.05, 0.5) -- (-0.05, -0.05) -- (0.5, -0.05);
			\end{scope}
			\end{tikzpicture}
			\caption{The weights of the above configurations equal $b_1 * (1-b_1) = b_2 * (1-b_2) = 0$ using the Boolean-type product, and equal $b_1 (1-b_1)$, $b_2 (1 - b_2)$ using ordinary multiplication.}
			\label{fig:nucleation}
		\end{figure}
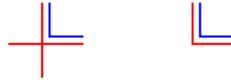

  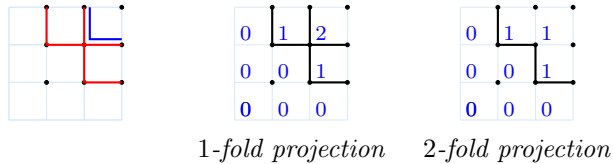
\begin{figure}[ht]
\centering
\begin{tikzpicture} 
\begin{scope}[scale = 0.5]

\foreach \x in {0, 1, 2, 3}
		{\draw[lightmintbg] (\x, 0) -- (\x, 3);
			\draw[lightmintbg] (0, \x) -- (3, \x);
		}

\foreach \x in {0, 1, 2, 3}
		{\draw[lightmintbg] (\x, 0) -- (\x, 3);
			\draw[lightmintbg] (0, \x) -- (3, \x);
		}

  		\foreach \x in {1, 2, 3}
		{
			\foreach \y in {1, 2, 3}
			\draw[fill] (\x, \y) circle (0.05);
		}
\draw[thick, red] (1,3) -- (1,2) -- (2,2)--(2,1) -- (3,1);
\draw[thick, red] (2,3) -- (2,2) -- (3,2);
\draw[thick, blue] (2.15,3) -- (2.15,2.15) -- (3,2.15);

\end{scope}

\begin{scope}[xshift =3cm, scale = 0.5]

\foreach \x in {0, 1, 2, 3}
		{\draw[lightmintbg] (\x, 0) -- (\x, 3);
			\draw[lightmintbg] (0, \x) -- (3, \x);
		}

  		\foreach \x in {1, 2, 3}
		{
			\foreach \y in {1, 2, 3}
			\draw[fill] (\x, \y) circle (0.05);
		}
\draw[thick, black] (1,3) -- (1,2) -- (2,2)--(2,1) -- (3,1);
\draw[thick, black] (2,3) -- (2,2) -- (3,2);

\node at (1.5, -0.8) {$1$-fold projection};
\foreach \x in {0, 1, 2}
    {
        \node at (\x + 0.3, 0.3) {\footnotesize $\blue{0}$}; 
    }
\foreach \x in {0, 1, 2}
{\node at (0.3, \x + 0.3) {\footnotesize $\blue{0}$}; }
\node at (1.3, 1.3) {\footnotesize $\blue{0}$};
\node at (1.3, 2.3) {\footnotesize $\blue{1}$};
\node at (2.3, 1.3) {\footnotesize $\blue{1}$};
\node at (2.3, 2.3) {\footnotesize $\blue{2}$};
\end{scope}

\begin{scope}[xshift = 6cm, scale = 0.5]
\foreach \x in {0, 1, 2, 3}
		{\draw[lightmintbg] (\x, 0) -- (\x, 3);
			\draw[lightmintbg] (0, \x) -- (3, \x);
		}

  		\foreach \x in {1, 2, 3}
		{
			\foreach \y in {1, 2, 3}
			\draw[fill] (\x, \y) circle (0.05);
		}
\draw[thick, black] (1,3) -- (1,2) -- (2,2)--(2,1) -- (3,1);
\node at (1.5, -0.8) {$2$-fold projection};

\foreach \x in {0, 1, 2}
    {
        \node at (\x + 0.3, 0.3) {\footnotesize $\blue{0}$}; 
    }
\foreach \x in {0, 1, 2}
{\node at (0.3, \x + 0.3) {\footnotesize $\blue{0}$}; }
\node at (1.3, 1.3) {\footnotesize $\blue{0}$};
\node at (1.3, 2.3) {\footnotesize $\blue{1}$};
\node at (2.3, 1.3) {\footnotesize $\blue{1}$};
\node at (2.3, 2.3) {\footnotesize $\blue{1}$};

\end{scope} 
\end{tikzpicture}
\caption{A sampling that violates the monotonicity of the height function since the height function in the top-right corner of its $2$-fold projection is smaller than the height function in the top-right corner of its $1$-fold projection.}
\label{fig:badConfig}
\end{figure}

\end{rmk}
	
	\section{The colored model and Proof of Theorem \ref{thm:maincs6v}}
	\label{sec:subadditivity}
With the help of the matrices $\{\mathsf{L}^n\}_{n \in \mathbb{Z}_{\geq 1}}$, we apply the superadditive ergodic theorem to the colored CS6V model and prove the existence of the limit shape. The argument is similar to \cite[Section 3]{drillick2022hydrodynamics} with certain adaptations.

	We are going to construct $\{X_{m, n}: 0 \leq m \leq n\}$ as discussed in the introduction, using the colored CS6V model. Before doing that, let us recall Liggett's superadditive ergodic theorem. For our purposes, we formulate it in the superadditive setting by placing negative signs where necessary. 
	\begin{theorem}[{\cite[page 277]{liggett2012interacting}}]
		\label{thm:liggettergodic}
		Suppose $\{X_{m, n}\}$ is a collection of random variables that is indexed by integers $0 \leq m \leq n$ and satisfies: 
		\begin{enumerate}[leftmargin = 2em, label = (\roman*)]
			\item \label{item:subadditive} Almost surely $X_{0, 0} = 0$ and  $X_{0, n} \geq X_{0, m} + X_{m, n}$ for $0 \leq m \leq n$.
			\item \label{item:ergodic} For each $k \geq 1$, $\{X_{(n-1)k, nk}: n \geq 1\}$ is a stationary and ergodic process. 
			\item \label{item:equalind} $\{X_{m, m+k}: k \geq 0\}  \overset{d}{=} \{X_{m+1, m+k+1}: k \geq 0\}$ 
			for each $m \geq 0$.
			\item \label{item:expectation} $\mathbb{E}[X_{0, 1}^-] < \infty$ where $x^{-} = \max(-x, 0)$. 
		\end{enumerate}
		Then there exists 
		a constant $\gamma = \sup_{n \geq 1} \frac{\mathbb{E}[X_{0, n}]}{n} \in (-\infty, \infty]$ satisfying   
		\begin{align*}
		\gamma = \lim_{n \to \infty} \frac{X_{0, n}}{n} \text{ a.s.}
		\end{align*}
	\end{theorem}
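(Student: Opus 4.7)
The plan is to sketch the proof of Liggett's superadditive ergodic theorem, which relaxes Kingman's joint stationarity requirement to the weaker diagonal stationarity in (ii). The proof proceeds in three steps: producing the deterministic limit $\gamma$, proving an almost-sure lower bound on $\liminf X_{0,n}/n$, and establishing the matching upper bound on $\limsup X_{0,n}/n$.

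First I would identify the constant $\gamma$. From (iii), $\mathbb{E}[X_{m,m+k}] = \mathbb{E}[X_{0,k}]$ for every $m$, so taking expectations in (i) shows that $a_n := \mathbb{E}[X_{0,n}]$ is superadditive, and Fekete's lemma yields $a_n/n \to \sup_n a_n/n = \gamma \in (-\infty,\infty]$. Iterating (i) and applying (iii) gives $\mathbb{E}[X_{0,k}^-] \leq k\,\mathbb{E}[X_{0,1}^-] < \infty$ by (iv), so $\gamma > -\infty$. Next, for the pointwise lower bound, fix $k \geq 1$ and iterate (i) to obtain $X_{0,nk} \geq \sum_{j=0}^{n-1} X_{jk,(j+1)k}$. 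By (ii) the summands form a stationary ergodic sequence with common expectation $a_k$ and integrable negative parts, so Birkhoff's pointwise ergodic theorem gives $\liminf_{n\to\infty} X_{0,nk}/(nk) \geq a_k/k$ almost surely. A superadditive sandwich interpolates between consecutive multiples of $k$ (the gap being controlled by a single increment whose distribution does not depend on the starting index thanks to (iii)), and taking the supremum over $k$ yields $\liminf_{n\to\infty} X_{0,n}/n \geq \gamma$ almost surely.

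The main obstacle is the matching upper bound $\limsup_{n\to\infty} X_{0,n}/n \leq \gamma$ almost surely, since without joint stationarity of the full array Kingman's original argument does not apply directly. Liggett's strategy is to bound the $\limsup$ in expectation and then promote this to an almost-sure equality using the pointwise lower bound from the previous step. Setting $L := \limsup_{n\to\infty} X_{0,n}/n$, the goal is $\mathbb{E}[L] \leq \gamma$: for each truncation level $M$, the bounded sequence $(X_{0,n}/n) \wedge M$ admits a reverse Fatou inequality, giving $\mathbb{E}[L \wedge M] \leq \limsup_n \mathbb{E}[(X_{0,n}/n) \wedge M] \leq \gamma \wedge M$, and monotone convergence as $M \to \infty$ delivers $\mathbb{E}[L] \leq \gamma$. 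Combined with $L \geq \gamma$ almost surely from Step 2, this forces $L = \gamma$ almost surely. The technical heart of the argument is justifying the reverse Fatou uniformly in $M$, which requires controlling the positive tail of $X_{0,n}/n$ via the superadditive structure; the case $\gamma = \infty$ is already handled by Step 2, which alone forces divergence.
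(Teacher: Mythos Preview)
The paper does not prove Theorem~\ref{thm:liggettergodic}; it is quoted as a black box from Liggett's book and used as a tool. So there is no ``paper's proof'' to compare against, and your task was really to reproduce Liggett's argument.

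Steps~1 and~2 of your sketch are essentially right: Fekete gives the constant, and Birkhoff applied to the diagonal sequence from~(ii) gives the a.s.\ lower bound along multiples of $k$, with interpolation handled by~(iii). Step~3, however, contains a genuine error. You write
\[
\mathbb{E}[L\wedge M]\;\leq\;\limsup_{n}\,\mathbb{E}\bigl[(X_{0,n}/n)\wedge M\bigr],
\]
invoking ``reverse Fatou'' for the sequence bounded above by $M$. But reverse Fatou gives the \emph{opposite} inequality: if $Y_n\leq M$ then $\mathbb{E}[\limsup Y_n]\geq\limsup\mathbb{E}[Y_n]$. Your displayed inequality is exactly backwards, and there is no uniform-integrability patch that reverses it; the vague remark about ``controlling the positive tail via the superadditive structure'' does not repair this.

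The upper bound $\limsup X_{0,n}/n\leq\gamma$ is the genuinely nontrivial half of the theorem, and its proof requires more than a Fatou-type one-liner. Liggett's argument (following Kingman) proceeds by a covering/decomposition idea: one shows that for each $\varepsilon>0$ and almost every $\omega$, the interval $[0,n]$ can be partitioned into blocks $[a_i,b_i]$ on which $X_{a_i,b_i}/(b_i-a_i)$ is within $\varepsilon$ of $\overline{X}:=\limsup X_{0,n}/n$, plus a remainder of density $o(1)$. Superadditivity then yields $X_{0,n}\leq n(\overline{X}+\varepsilon)+o(n)$ in expectation, forcing $\mathbb{E}[\overline{X}]\leq\gamma$; combined with $\overline{X}\geq\underline{X}\geq\gamma$ from Step~2 this gives $\overline{X}=\gamma$ a.s. You would need to supply this block-decomposition argument (or an equivalent one) to close the gap.
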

We proceed to define a colored CS6V model on the first quadrant. Recall that $I, J$ are the periodicities of the weights of the CS6V model, as in Assumption \ref{assumption}. Fix a direction $(x, y) \in \mathbb{Q}_{> 0}^2$, and define 
	\begin{equation}\label{eq:defN}
	N:= \text{ the smallest positive integer such that } \frac{Nx}{I}, \frac{Ny}{J} \in \mathbb{N}. 
	\end{equation}
	We have omitted the dependence of $N$ on $x, y$. Assign color $-k$ to the vertex $(a, b)$ if either $a \in [N(k-1)x+1, Nkx]$ or $b \in [N(k-1)y+1, Nky]$, see the left panel of Figure \ref{fig:step}. We use negative numbers to label the colors so that we can have infinitely many colors of increasing priority. Given the input lines of a vertex with color $-k$, we sample the output lines according to stochastic matrices $\LL^k$ with parameters $b_1 = b_1 (i, j)$ and $b_2 = b_2 (i, j)$. The colored CS6V model is then sampled sequentially at the vertices $(1, 1), (2, 1), (1, 2), (3, 1), (2, 2), (1, 3), \dots$ to the entire quadrant, using the $\mathsf{L}$-matrices assigned to different vertices. By definition, $\LL^k$ is defined for the positive colors $1, \ldots, k$, so we just map the colors $-k, \ldots, -1$ to $1, \ldots, k$, preserving their order. One can check that if a vertex of color $-k$ has no input lines, then the only possible output lines emanating from that vertex can be of color $-k$. See the right panel of Figure \ref{fig:step} for a sampling of the colored CS6V model. 

	We proceed to define the random variables $\{X_{m, n}, m, n \in \Z_{\geq 0}, m \leq n\}$ that satisfy the condition of Theorem \ref{thm:liggettergodic}. Let $v_{z, Nny}$ denote the number of vertical lines exiting the vertex $(z, Nny)$ with colors that belong to $\{-n, \dots, -m-1\}$. We define 
	\begin{equation}\label{eq:Xmn}
	X_{m, n} := \sum_{z = Nmx+1}^{Nnx} \big(v_{z, Nny} \text{ mod } 2\big).
	\end{equation}
	The next proposition encodes the height function of the CS6V model in $\{X_{m, n}: 0 \leq m \leq n\}$. 
	
		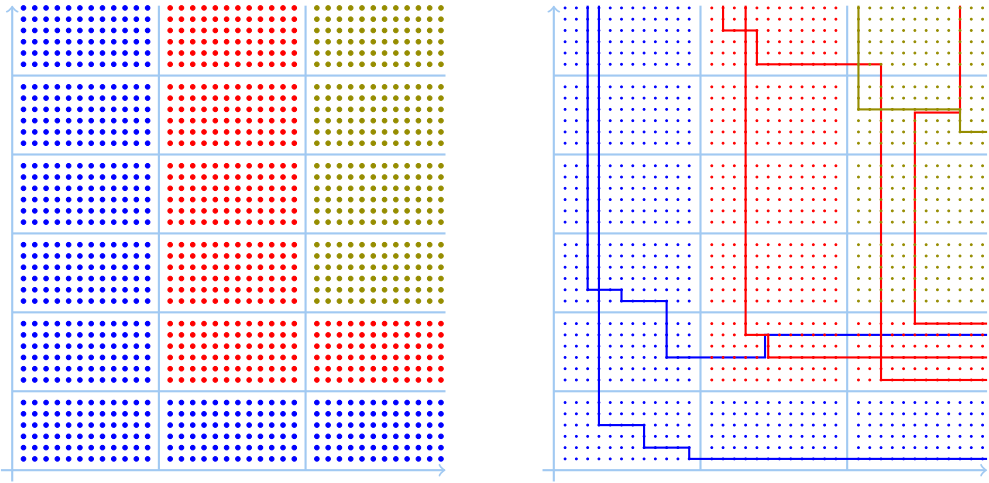
\begin{figure}[ht]
		\centering
		\begin{tikzpicture}[scale = 0.6]
		
	\begin{scope}[xshift = 0cm]
		
		\draw[thick,  mintbg, ->] (-.25, 0) -- (9.6, 0);
        \draw[thick,  mintbg, ->] (0, -.25) -- (0, 10.3);
        \draw[thick,  mintbg] (3.25, 0) -- (3.25, 10.3);
        \draw[thick,  mintbg] (6.5, 0) -- (6.5, 10.3);
        \draw[thick,  mintbg] (0, 1.75) -- (9.6, 1.75);
        \draw[thick,  mintbg] (0, 3.5) -- (9.6, 3.5);
        \draw[thick,  mintbg] (0, 5.25) -- (9.6, 5.25);
        \draw[thick,  mintbg] (0, 7) -- (9.6, 7);
        \draw[thick,  mintbg] (0, 8.75) -- (9.6, 8.75);

        \foreach \x in {1, 2, 3, 4, 5, 6, 7, 8, 9, 10, 11, 12}
		{
			\foreach \y in {1, 2, 3, 4, 5, 6}
			\draw[fill, blue] (\x/4,  \y/4) circle (0.05);
		}
		
		\foreach \x in {1, 2, 3, 4, 5, 6, 7, 8, 9, 10, 11, 12}
		{
			\foreach \y in {1, 2, 3, 4, 5, 6}
			\draw[fill, blue] (3.25+ \x/4, \y/4) circle (0.05);
		}
		
			\foreach \x in {1, 2, 3, 4, 5, 6, 7, 8, 9, 10, 11, 12}
		{
			\foreach \y in {1, 2, 3, 4, 5, 6}
			\draw[fill, blue] (6.5 + \x/4,  \y/4) circle (0.05);
		}
	
	        \foreach \x in {1, 2, 3, 4, 5, 6, 7, 8, 9, 10, 11, 12}
		{
			\foreach \y in {1, 2, 3, 4, 5, 6}
			\draw[fill, blue] (\x/4,  1.75 + \y/4) circle (0.05);
		}
		
		\foreach \x in {1, 2, 3, 4, 5, 6, 7, 8, 9, 10, 11, 12}
		{
			\foreach \y in {1, 2, 3, 4, 5, 6}
			\draw[fill, red] (3.25+ \x/4, 1.75 +\y/4) circle (0.05);
		}
		
			\foreach \x in {1, 2, 3, 4, 5, 6, 7, 8, 9, 10, 11, 12}
		{
			\foreach \y in {1, 2, 3, 4, 5, 6}
			\draw[fill, red] (6.5 + \x/4,  1.75+ \y/4) circle (0.05);
		}
	
	
	        \foreach \x in {1, 2, 3, 4, 5, 6, 7, 8, 9, 10, 11, 12}
		{
			\foreach \y in {1, 2, 3, 4, 5, 6}
			\draw[fill, blue] (\x/4,  3.5 + \y/4) circle (0.05);
		}
		
		\foreach \x in {1, 2, 3, 4, 5, 6, 7, 8, 9, 10, 11, 12}
		{
			\foreach \y in {1, 2, 3, 4, 5, 6}
			\draw[fill, red] (3.25+ \x/4, 3.5 +\y/4) circle (0.05);
		}
		
			\foreach \x in {1, 2, 3, 4, 5, 6, 7, 8, 9, 10, 11, 12}
		{
			\foreach \y in {1, 2, 3, 4, 5, 6}
			\draw[fill, olive] (6.5 + \x/4,  3.5+ \y/4) circle (0.05);
		}
	
	
	        \foreach \x in {1, 2, 3, 4, 5, 6, 7, 8, 9, 10, 11, 12}
		{
			\foreach \y in {1, 2, 3, 4, 5, 6}
			\draw[fill, blue] (\x/4,  5.25 + \y/4) circle (0.05);
		}
		
		\foreach \x in {1, 2, 3, 4, 5, 6, 7, 8, 9, 10, 11, 12}
		{
			\foreach \y in {1, 2, 3, 4, 5, 6}
			\draw[fill, red] (3.25+ \x/4, 5.25 +\y/4) circle (0.05);
		}
		
			\foreach \x in {1, 2, 3, 4, 5, 6, 7, 8, 9, 10, 11, 12}
		{
			\foreach \y in {1, 2, 3, 4, 5, 6}
			\draw[fill, olive] (6.5 + \x/4,  5.25+ \y/4) circle (0.05);
		}
	
	        \foreach \x in {1, 2, 3, 4, 5, 6, 7, 8, 9, 10, 11, 12}
		{
			\foreach \y in {1, 2, 3, 4, 5, 6}
			\draw[fill, blue] (\x/4,  7 + \y/4) circle (0.05);
		}
		
		\foreach \x in {1, 2, 3, 4, 5, 6, 7, 8, 9, 10, 11, 12}
		{
			\foreach \y in {1, 2, 3, 4, 5, 6}
			\draw[fill, red] (3.25+ \x/4, 7 +\y/4) circle (0.05);
		}
		
			\foreach \x in {1, 2, 3, 4, 5, 6, 7, 8, 9, 10, 11, 12}
		{
			\foreach \y in {1, 2, 3, 4, 5, 6}
			\draw[fill, olive] (6.5 + \x/4,  7+ \y/4) circle (0.05);
		}
	
	        \foreach \x in {1, 2, 3, 4, 5, 6, 7, 8, 9, 10, 11, 12}
		{
			\foreach \y in {1, 2, 3, 4, 5, 6}
			\draw[fill, blue] (\x/4,  8.75 + \y/4) circle (0.05);
		}
		
		\foreach \x in {1, 2, 3, 4, 5, 6, 7, 8, 9, 10, 11, 12}
		{
			\foreach \y in {1, 2, 3, 4, 5, 6}
			\draw[fill, red] (3.25+ \x/4, 8.75 +\y/4) circle (0.05);
		}
		
			\foreach \x in {1, 2, 3, 4, 5, 6, 7, 8, 9, 10, 11, 12}
		{
			\foreach \y in {1, 2, 3, 4, 5, 6}
			\draw[fill, olive] (6.5 + \x/4,  8.75+ \y/4) circle (0.05);
		}

\end{scope}

\begin{scope}[xshift = 12cm]

		\draw[thick,  mintbg, ->] (-.25, 0) -- (9.6, 0);
        \draw[thick,  mintbg, ->] (0, -.25) -- (0, 10.3);
        \draw[thick,  mintbg] (3.25, 0) -- (3.25, 10.3);
        \draw[thick,  mintbg] (6.5, 0) -- (6.5, 10.3);
        \draw[thick,  mintbg] (0, 1.75) -- (9.6, 1.75);
        \draw[thick,  mintbg] (0, 3.5) -- (9.6, 3.5);
        \draw[thick,  mintbg] (0, 5.25) -- (9.6, 5.25);
        \draw[thick,  mintbg] (0, 7) -- (9.6, 7);
        \draw[thick,  mintbg] (0, 8.75) -- (9.6, 8.75);
        
        \draw[thick, blue] (0.75, 10.3) -- (0.75, 4) -- (1, 4) -- (1,1) -- (2,1) -- (2,.5) -- (3,0.5) -- (3, 0.25) --(9.6, .25);
        \draw[thick, blue] (1, 10.3) -- (1, 4) -- (1.5,4) -- (1.5, 3.75) -- (2.5, 3.75) -- (2.5, 2.5) -- (4.25, 2.5)  -- (4.68, 2.5) -- (4.68, 3.0)--(9.6, 3.0);
        \draw[thick, red] (4.25, 10.3) -- (4.25, 3)--(4.75, 3) -- (4.75, 2.5) -- (9.6, 2.5);
        \draw[thick, red] (3.75, 10.3) -- (3.75, 9.75)--(4.5, 9.75) -- (4.5, 9) --(7.25, 9) --(7.25, 2)-- (9.6, 2);
        \draw[thick, red] (9.6, 3.25) --(8, 3.25) --(8,7.93) --(9, 7.93)--(9, 10.3); 
        \draw[thick, olive] (6.75, 10.3) -- (6.75, 8)--(9, 8)--(9, 7.5) --(9.6, 7.5);

        \foreach \x in {1, 2, 3, 4, 5, 6, 7, 8, 9, 10, 11, 12}
		{
			\foreach \y in {1, 2, 3, 4, 5, 6}
			\draw[fill, blue] (\x/4,  \y/4) circle (0.02);
		}
		
		\foreach \x in {1, 2, 3, 4, 5, 6, 7, 8, 9, 10, 11, 12}
		{
			\foreach \y in {1, 2, 3, 4, 5, 6}
			\draw[fill, blue] (3.25+ \x/4, \y/4) circle (0.02);
		}
		
			\foreach \x in {1, 2, 3, 4, 5, 6, 7, 8, 9, 10, 11, 12}
		{
			\foreach \y in {1, 2, 3, 4, 5, 6}
			\draw[fill, blue] (6.5 + \x/4,  \y/4) circle (0.02);
		}
	
	        \foreach \x in {1, 2, 3, 4, 5, 6, 7, 8, 9, 10, 11, 12}
		{
			\foreach \y in {1, 2, 3, 4, 5, 6}
			\draw[fill, blue] (\x/4,  1.75 + \y/4) circle (0.02);
		}
		
		\foreach \x in {1, 2, 3, 4, 5, 6, 7, 8, 9, 10, 11, 12}
		{
			\foreach \y in {1, 2, 3, 4, 5, 6}
			\draw[fill, red] (3.25+ \x/4, 1.75 +\y/4) circle (0.02);
		}
		
			\foreach \x in {1, 2, 3, 4, 5, 6, 7, 8, 9, 10, 11, 12}
		{
			\foreach \y in {1, 2, 3, 4, 5, 6}
			\draw[fill, red] (6.5 + \x/4,  1.75+ \y/4) circle (0.02);
		}
	
	
	        \foreach \x in {1, 2, 3, 4, 5, 6, 7, 8, 9, 10, 11, 12}
		{
			\foreach \y in {1, 2, 3, 4, 5, 6}
			\draw[fill, blue] (\x/4,  3.5 + \y/4) circle (0.02);
		}
		
		\foreach \x in {1, 2, 3, 4, 5, 6, 7, 8, 9, 10, 11, 12}
		{
			\foreach \y in {1, 2, 3, 4, 5, 6}
			\draw[fill, red] (3.25+ \x/4, 3.5 +\y/4) circle (0.02);
		}
		
			\foreach \x in {1, 2, 3, 4, 5, 6, 7, 8, 9, 10, 11, 12}
		{
			\foreach \y in {1, 2, 3, 4, 5, 6}
			\draw[fill, olive] (6.5 + \x/4,  3.5+ \y/4) circle (0.02);
		}
	
	
	        \foreach \x in {1, 2, 3, 4, 5, 6, 7, 8, 9, 10, 11, 12}
		{
			\foreach \y in {1, 2, 3, 4, 5, 6}
			\draw[fill, blue] (\x/4,  5.25 + \y/4) circle (0.02);
		}
		
		\foreach \x in {1, 2, 3, 4, 5, 6, 7, 8, 9, 10, 11, 12}
		{
			\foreach \y in {1, 2, 3, 4, 5, 6}
			\draw[fill, red] (3.25+ \x/4, 5.25 +\y/4) circle (0.02);
		}
		
			\foreach \x in {1, 2, 3, 4, 5, 6, 7, 8, 9, 10, 11, 12}
		{
			\foreach \y in {1, 2, 3, 4, 5, 6}
			\draw[fill, olive] (6.5 + \x/4,  5.25+ \y/4) circle (0.02);
		}
	
	        \foreach \x in {1, 2, 3, 4, 5, 6, 7, 8, 9, 10, 11, 12}
		{
			\foreach \y in {1, 2, 3, 4, 5, 6}
			\draw[fill, blue] (\x/4,  7 + \y/4) circle (0.02);
		}
		
		\foreach \x in {1, 2, 3, 4, 5, 6, 7, 8, 9, 10, 11, 12}
		{
			\foreach \y in {1, 2, 3, 4, 5, 6}
			\draw[fill, red] (3.25+ \x/4, 7 +\y/4) circle (0.02);
		}
		
			\foreach \x in {1, 2, 3, 4, 5, 6, 7, 8, 9, 10, 11, 12}
		{
			\foreach \y in {1, 2, 3, 4, 5, 6}
			\draw[fill, olive] (6.5 + \x/4,  7+ \y/4) circle (0.02);
		}
	
	        \foreach \x in {1, 2, 3, 4, 5, 6, 7, 8, 9, 10, 11, 12}
		{
			\foreach \y in {1, 2, 3, 4, 5, 6}
			\draw[fill, blue] (\x/4,  8.75 + \y/4) circle (0.02);
		}
		
		\foreach \x in {1, 2, 3, 4, 5, 6, 7, 8, 9, 10, 11, 12}
		{
			\foreach \y in {1, 2, 3, 4, 5, 6}
			\draw[fill, red] (3.25+ \x/4, 8.75 +\y/4) circle (0.02);
		}
		
			\foreach \x in {1, 2, 3, 4, 5, 6, 7, 8, 9, 10, 11, 12}
		{
			\foreach \y in {1, 2, 3, 4, 5, 6}
			\draw[fill, olive] (6.5 + \x/4,  8.75+ \y/4) circle (0.02);
		}

\end{scope}

		\end{tikzpicture}
		\caption{\textbf{Left panel:} We illustrate how we color the vertices 
on the first quadrant when $I = 3, J = 2, x = 2$, and $y = 1$. In this case, $N = 6$. Let blue, red, and olive represent the colors $-1, -2$, and $-3$. Each rectangle contains $12 \times 6$ vertices with the same color. \textbf{Right panel:} A sampling of the colored CS6V model. The lines emanating from a vertex inherit the color of that vertex. Due to the property of color ignorance, the red lines can be sampled ignoring the blue lines, and the olive lines can be sampled ignoring both the blue and red lines. Note that lines of different colors can travel together across the same vertices.}
\label{fig:step}
	\end{figure}

	\begin{prop}\label{prop:equalind}
		We have $\{X_{0, k}, k \in \mathbb{Z}_{\geq 0}\} = \{H(Nkx, Nky), k \in \mathbb{Z}_{\geq 0}\}$. 
	\end{prop}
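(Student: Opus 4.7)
The plan is to use the mod $2$ erasure property (Proposition~\ref{prop:mod2erasure}) to project the colored CS6V model on the first quadrant down to the usual single-colored CS6V model, and then identify $X_{0,k}$ with the height function $H(Nkx, Nky)$ of the projected model.

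First, I would observe that within the rectangle $[1, Nkx] \times [1, Nky]$, only lines of colors in $\{-k, \dots, -1\}$ can appear. All vertices in this rectangle are assigned colors in $\{-k, \dots, -1\}$ by construction, so no line of color $-j$ with $j > k$ can originate inside. Moreover, since the paths of the CS6V model travel only rightward and upward, a line of color $-j'$ with $j' > k$ originating at a color-$-j'$ vertex outside $[1, Nkx] \times [1, Nky]$ cannot subsequently enter the rectangle.

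Second, I would apply the mod $2$ erasure property vertex-by-vertex. At a vertex of color $-j$ with $1 \leq j \leq k$, the sampling is governed by $\mathsf{L}^j$. Taking the trivial partition $\pi = \{\{1, \dots, j\}\}$ of length $1$, Proposition~\ref{prop:mod2erasure} yields that the distribution of the mod $2$ sums of the output line indicators equals $\mathsf{L}^1$ applied to the mod $2$ sums of the input indicators. Iterating this projection vertex-by-vertex, together with the sequential sampling of the colored model, shows that the \emph{parity process}---obtained by replacing each edge's colored configuration by the parity of the total number of lines on that edge---is globally distributed as the single-colored CS6V model with empty boundary data on the first quadrant.

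Third, in this projected single-colored model restricted to $[1, Nkx] \times [1, Nky]$, a vertical line exits the vertex $(z, Nky)$ through the top if and only if $v_{z, Nky}$ is odd. Summing over $z \in \{1, \dots, Nkx\}$ thus gives
\[
X_{0,k} \;=\; \sum_{z=1}^{Nkx} (v_{z, Nky} \bmod 2) \;=\; H(Nkx, Nky).
\]
Because the projection is defined globally on the entire first quadrant (not separately for each $k$), this identity holds jointly over all $k$, yielding the desired equality of processes. The main obstacle is simply the bookkeeping needed to verify that the vertex-by-vertex projection is globally compatible across vertices using different $\mathsf{L}^j$ matrices and across different values of $k$; once Proposition~\ref{prop:mod2erasure} is in hand, this reduces to checking that the sequential sampling rules of the projected and single-colored models agree in law, which is routine.
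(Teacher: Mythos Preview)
Your proposal is correct and follows essentially the same approach as the paper: both arguments invoke mod~2 erasure (Proposition~\ref{prop:mod2erasure}) to project the colored model to the single-colored CS6V model with empty boundary data, and then identify $X_{0,k}$ with the number of vertical lines crossing $[\tfrac12, Nkx+\tfrac12]\times\{Nky+\tfrac12\}$, namely $H(Nkx,Nky)$. The only cosmetic difference is that the paper fixes $n$ and works in the finite box $[\tfrac12,Nnx+\tfrac12]\times[\tfrac12,Nny+\tfrac12]$ (where all colors lie in $\{-n,\dots,-1\}$) to match finite-dimensional distributions, whereas you argue globally and note the vertex-by-vertex compatibility of the trivial partition across regions using different $\mathsf{L}^j$; both routes yield the same conclusion.
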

	
	\begin{proof}
		It suffices to show that for each $n \in \mathbb{N}$, $\{X_{0, k}, k = 0, \dots, n\} \overset{d}{=} \{H(Nkx, Nky), k = 0, \dots, n\}$. The lines in the square $[\frac{1}{2}, Nnx + \frac{1}{2}] \times [\frac{1}{2}, Nny +\frac{1}{2}]$ have colors that belong to $\{-n, \dots, -1\}$. Replace these colors with a single color. Then, by the property of mod 2 erasure, the resulting model reduces to the single-colored CS6V model. Note that $X_{0, k}$ in \eqref{eq:Xmn} is equal to the number of vertical lines that cross the segment $[\frac{1}{2}, Nkx+\frac{1}{2} ] \times \{Nky+\frac{1}{2}\}$, which is $H(Nkx, Nky)$. This implies that  $\{X_{0, k}, k = 0, \dots, n\} \overset{d}{=} \{H(Nkx, Nky), k = 0, \dots, n\}$.
	\end{proof}

	\begin{prop}\label{prop:item24}
		The stochastic process $\{X_{m, n}: m, n \in \mathbb{Z}_{\geq 0}, m \leq n\}$ satisfies conditions \ref{item:ergodic}--\ref{item:expectation} of Theorem \ref{thm:liggettergodic}.
	\end{prop}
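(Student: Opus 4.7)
The plan is to verify conditions (ii)--(iv) of Theorem \ref{thm:liggettergodic} in turn, leveraging color ignorance (Proposition \ref{prop:colorignore}) and the periodicity of the weights. Condition (iv) is immediate: $X_{0,1}$ is a sum of $\{0,1\}$-valued terms and is therefore non-negative, so $\mathbb{E}[X_{0,1}^-] = 0$.

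For (iii), the key step is to extract, from the full colored CS6V model on $\mathbb{Z}_{\geq 1}^2$, a self-contained sub-model that controls $X_{m, m+k}$. Define the L-shape strip $R_m^k = \{(a, b) \in \mathbb{Z}_{\geq 1}^2 : m+1 \leq \max(\lceil a/(Nx) \rceil, \lceil b/(Ny) \rceil) \leq m+k\}$, which is precisely the set of vertices assigned colors $-(m+1), \ldots, -(m+k)$. First I will argue, by induction on the antidiagonal sampling order, that at a vertex of color $-j$ only lines of colors $\{-j, -j+1, \ldots, -1\}$ can be present, since $\mathsf{L}^j$ produces outputs of exactly these colors and both the left and bottom neighbors have color $-j'$ with $j' \leq j$. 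In particular, target-color lines (those of colors in $\{-(m+k), \ldots, -(m+1)\}$) appear only at vertices in $\bigcup_{k' \geq 1} R_m^{k'}$, and for $z \leq N(m+k)x$ the dependency cone of $(z, N(m+k)y)$ lies inside $R_0^m \cup R_m^k$. Applying Proposition \ref{prop:colorignore} vertex by vertex then shows that the joint law of the target-color sub-model on $R_m^k$ coincides with that of a $k$-colored CS6V model on $R_m^k$ with empty boundary data, in which the vertex at L-shape position $m+j$ is sampled using $\mathsf{L}^j$ under the order-preserving relabeling $-(m+j) \mapsto -j$. Thus $\{X_{m, m+k} : k \geq 0\}$ is a deterministic functional of the joint sub-model $(R_m^k)_{k \geq 0}$.

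Next, the translation $\tau : (a, b) \mapsto (a + Nx, b + Ny)$ sends $R_m^k$ onto $R_{m+1}^k$ for every $k$, and by Assumption \ref{assumption} combined with the definition of $N$ in \eqref{eq:defN} the vertex weights $b_1(\cdot, \cdot), b_2(\cdot, \cdot)$ are invariant under $\tau$. Hence the pushforward under $\tau$ of the joint sub-model on $\{R_m^k\}_{k \geq 0}$ agrees in distribution with the joint sub-model on $\{R_{m+1}^k\}_{k \geq 0}$, which yields $\{X_{m, m+k} : k \geq 0\} \stackrel{d}{=} \{X_{m+1, m+k+1} : k \geq 0\}$, i.e., condition (iii).

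For (ii), stationarity of $\{X_{(n-1)k, nk} : n \geq 1\}$ follows by iterating the above shift-invariance with step $\tau^k$. For ergodicity, I realize the colored CS6V model as a deterministic function of an i.i.d. uniform random field $\{U_v\}$ (with $U_v$ driving the sampling at vertex $v$); after extending $\{U_v\}$ to all of $\mathbb{Z}^2$, the shift $\tau^k$ is a Bernoulli shift and hence ergodic. Since the process is stationary and is a shift-covariant measurable functional of $\{U_v\}$, it inherits ergodicity. The principal technical obstacle is the inductive verification that a vertex of color $-j$ processes only lines of colors $\{-j, \ldots, -1\}$, which is what enables the sub-model on $R_m^k$ to be closed and makes color ignorance applicable uniformly across the strip.
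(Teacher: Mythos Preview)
Your overall strategy is sound and close in spirit to the paper's, but two points deserve mention.

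First, a concrete error: your formula for $R_m^k$ uses $\max$, whereas the paper's coloring rule actually assigns color $-\min(\lceil a/(Nx)\rceil, \lceil b/(Ny)\rceil)$ to vertex $(a,b)$; this is how the ambiguous ``either \ldots or'' is resolved, as Figure~\ref{fig:step} and the L-shaped area described just before Lemma~\ref{lem:someequality} confirm. Under the actual rule the arms of your L-shape (for instance $a > Nmx$, $b \leq Nmy$) carry colors in $\{-m,\dots,-1\}$, not target colors, so your claim that $R_m^k$ is exactly the set of target-colored vertices is false. Replacing $\max$ by $\min$ repairs the statement; once you do so, the portion of the region that actually matters for $X_{m,m+k}$ is just the rectangle $[Nmx+1, N(m+k)x] \times [Nmy+1, N(m+k)y]$. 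The paper handles (iii) directly on the shifted quadrant $[Nmx+1,\infty) \times [Nmy+1,\infty)$ without any L-shape bookkeeping: lines entering it from the left or below carry only colors $\{-m,\dots,-1\}$, so by color ignorance the higher-priority lines there are distributed, after the shift $(Nmx,Nmy)$, exactly as in the original quadrant.

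Second, for (ii) the paper takes a much shorter path than your Bernoulli-shift argument. The vertices immediately to the left of or below the rectangle $[Nmx+1,Nnx] \times [Nmy+1,Nny]$ all have color $-m$, so every line entering that rectangle carries a color in $\{-m,\dots,-1\}$; color ignorance then shows that $X_{m,n}$ is a function of the randomness inside this rectangle alone. For the family $\{X_{(n-1)k,nk}\}_{n\geq 1}$ these rectangles are pairwise disjoint, so the variables are in fact \emph{i.i.d.}, which gives stationarity and ergodicity immediately. Your factor-of-a-Bernoulli-shift argument is valid but unnecessary, and it obscures this stronger independence.
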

	\begin{proof}
		We first prove \ref{item:ergodic}. Consider the square $[Nmx+\frac{1}{2}, Nnx+\frac{1}{2}] \times [Nmy+\frac{1}{2}, Nny +\frac{1}{2}]$. There are lines flowing inside through the left boundary $\{Nmx + \frac{1}{2}\} \times [Nmy+\frac{1}{2}, Nny+\frac{1}{2}]$ and the bottom boundary $[Nmx+\frac{1}{2}, Nnx+\frac{1}{2}] \times \{Nmy+\frac{1}{2}\}$. These lines have colors belonging to $\{-m, \dots, -1\}$. The vertices in $[Nmx+\frac{1}{2}, Nnx+\frac{1}{2}] \times [Nmy+\frac{1}{2}, Nny+\frac{1}{2}]$ have colors in $\{-n, \dots, -m-1\}$. Note that the color $i$ takes priority over $j$ if $i < j$, so the lines that emanate from the vertices in $[Nmx+\frac{1}{2}, Nnx+\frac{1}{2}] \times [Nmy+\frac{1}{2}, Nny+\frac{1}{2}]$ have higher priority than the lines entering through the left and bottom boundaries. Therefore, by Proposition \ref{prop:colorignore}, the behavior of the lines with colors $\{-n, \dots, -m - 1\}$ in the rectangle $[Nmx+\frac{1}{2}, Nnx+\frac{1}{2}] \times [Nmy+\frac{1}{2},Nny+\frac{1}{2}]$ does not depend on the lower priority lines entering from the left and bottom. Hence, the distribution of $X_{m, n}$ is independent of the number and location of the lines entering the bottom and left boundaries of $[Nmx+\frac{1}{2}, Nnx+\frac{1}{2}] \times [Nmy+\frac{1}{2}, Nny+\frac{1}{2}]$. This implies that for each $k \geq 1$, the random variables $\{X_{(n-1)k, nk}, n \geq 1\}$ are independent. It is straightforward to see that $X_{(n-1)k, nk}$ has the same distribution as $H(Nkx, Nky)$ for all $n \geq 1$, therefore this sequence is i.i.d, and hence it is stationary and ergodic.

		We proceed to prove \ref{item:equalind}. It suffices to show that for arbitrary $m \in \Z_{\geq 0}$, 
		\begin{equation}\label{eq:equalind}
		\{X_{m, m+k}, k \geq 0\} \overset{d}{=} \{X_{0, k}, k \geq 0\}.
		\end{equation}
		We look at the colored CS6V model restricted to $[mNx+\frac{1}{2}, \infty) \times [mNy+\frac{1}{2}, \infty)$. Note that there are lines with colors in $\{-m, \dots, -1\}$ entering from the left and bottom boundaries of $[mNx+\frac{1}{2}, \infty) \times [mNy+\frac{1}{2}, \infty)$. By Proposition \ref{prop:colorignore}, the behavior of lines in $[mNx+\frac{1}{2}, \infty) \times [mNy+\frac{1}{2}, \infty)$ with colors less than $-m$  is not affected by the lower priority lines entering from the boundary. Since $b_1 (mNx + \cdot, mNy + \cdot) = b_1(\cdot, \cdot)$ and $b_2 (mNx + \cdot, mNy + \cdot) = b_2(\cdot, \cdot)$, this implies that after a shift by $(mNx, mNy)$, the lines with colors $i_1, \dots, i_k \in \Z_{\leq -m-1}$ in $[mNx+\frac{1}{2}, \infty) \times [mNy+\frac{1}{2}, \infty)$ behave the same (in distribution) as the lines with colors $i_1 + m, \dots, i_k + m$ in $[\frac{1}{2}, \infty) \times [\frac{1}{2}, \infty)$. Hence, we conclude \eqref{eq:equalind}.  

		Finally, we have $X_{0, 1}^{-} = 0$ because $X_{0, 1}$ is non-negative. Hence, \ref{item:expectation} holds. 
	\end{proof}

	Let us proceed to prove that $\{X_{m, n}, m, n \in \Z_{\geq 0}, m \leq n\}$ also satisfies the superadditive condition \ref{item:subadditive} in Theorem \ref{thm:liggettergodic}. We begin with some preparation. In the square $[\frac{1}{2}, Nnx+\frac{1}{2}] \times [\frac{1}{2}, Nny+\frac{1}{2}]$, we replace the colors $\{-m, \dots, -1\}$ with the color $-1$ and replace the colors $\{-n, \dots, -m-1\}$ with the color $-2$. After that, as long as there are two lines with the same color that travel together, we erase them.
	By Proposition \ref{prop:mod2erasure}, the resulting model is a two-colored CS6V model. In particular, the vertices have color $-1$ in the L-shaped area $[\frac{1}{2}, Nmx+\frac{1}{2}] \times [\frac{1}{2}, Nny+\frac{1}{2}] \cup [\frac{1}{2}, Nnx+\frac{1}{2}] \times [\frac{1}{2}, Nmy+\frac{1}{2}]$. The vertices have color $-2$ in the square $[Nmx+\frac{1}{2}, Nnx+\frac{1}{2}] \times [Nmy + \frac{1}{2}, Nny+\frac{1}{2}]$.   

	For the resulting two-colored CS6V model, let $Q_1$ be the number of vertical lines with color $-1$ that cross the segment $[\frac{1}{2}, Nmx+\frac{1}{2}] \times \{Nmy+\frac{1}{2}\}$, let $Q_2$ be the number of vertical lines with color $-2$ that cross the segment $[Nmx + \frac{1}{2}, Nnx+\frac{1}{2}] \times \{Nny + \frac{1}{2}\}$, and let $P_1$ be the number of horizontal lines with color $-1$ that cross $\{Nmx + \frac{1}{2}\} \times [Nmy+\frac{1}{2}, Nny+ \frac{1}{2}]$.  Finally, let $R$ be the number of single vertical lines of either color 
	that cross $[Nmx+\frac{1}{2}, Nnx+\frac{1}{2}] \times \{Nny + \frac{1}{2}\}$ (i.e., lines that do not travel in a pair).
	
	\begin{lemma}\label{lem:someequality}
		The following result holds:
		\begin{align}
		\label{eq:u0m}
		&
		X_{0, m} = Q_1,
		\\
		\label{eq:umn}
		&X_{m, n}= Q_2,  \\
		\label{eq:u0n}
		&X_{0, n} = Q_1 + P_1 + R.
		\end{align} 
	\end{lemma}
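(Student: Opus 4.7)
The plan is to prove the three equalities by analyzing the multi-colored model through appropriate mod 2 aggregations. For \eqref{eq:u0m} and \eqref{eq:umn} a single application of the mod 2 erasure property (Proposition \ref{prop:mod2erasure}) suffices, while for \eqref{eq:u0n} I would additionally exploit a vertex-local conservation law satisfied by the CS6V weights.

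For \eqref{eq:u0m}, I would instantiate $X_{0,m}$ via its defining formula and note that $v_{z, Nmy} \pmod 2$ is the parity of the number of colors in $\{-m, \dots, -1\}$ present on the vertical edge above $(z, Nmy)$. By Proposition \ref{prop:mod2erasure} applied to the partition that groups $\{-m, \dots, -1\}$ into one color and $\{-n, \dots, -m-1\}$ into another, this parity equals the indicator that a color $-1$ line occupies that edge in the aggregated two-colored model. Summing over $z = 1, \dots, Nmx$ then gives precisely $Q_1$. An entirely analogous argument with the second aggregation block yields \eqref{eq:umn}.

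For \eqref{eq:u0n}, I would split the segment $[\frac{1}{2}, Nnx + \frac{1}{2}] \times \{Nny + \frac{1}{2}\}$ into the portion above the L-shape, $[\frac{1}{2}, Nmx + \frac{1}{2}] \times \{Nny + \frac{1}{2}\}$, and the portion above the interior $-2$ box, $[Nmx + \frac{1}{2}, Nnx + \frac{1}{2}] \times \{Nny + \frac{1}{2}\}$. As in the previous step, $v_{z, Nny} \pmod 2$ equals the XOR of the color $-1$ and color $-2$ indicators in the two-colored model, i.e., $1$ if and only if exactly one color occupies the edge. A color $-2$ line can only be created at a vertex in the interior box (since the L-shape vertices use the single-color matrix $\LL^1$); since such lines travel only northeast and the top-right corner of the interior box coincides with that of the full rectangle, no color $-2$ line reaches the left portion. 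Thus the left-portion contribution to $X_{0,n}$ equals the number $T$ of color $-1$ vertical lines on $[\frac{1}{2}, Nmx + \frac{1}{2}] \times \{Nny + \frac{1}{2}\}$, while the right-portion contribution equals $R$ by definition.

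To close the argument, I would prove $T = Q_1 + P_1$ via a local conservation law. By Proposition \ref{prop:colorignore}, the color $-1$ sub-system is a single-colored CS6V model on the full rectangle, so restricting to the sub-rectangle $[1, Nmx] \times [Nmy + 1, Nny]$ yields a CS6V configuration whose boundary data are exactly the counts appearing in the lemma. I would verify by inspecting the six configurations in Figure \ref{fig:cs6v} the vertex-local identity $t_o + l_i = b_i + r_o$, where $b_i, l_i, t_o, r_o \in \{0,1\}$ record the presence of a line on the bottom, left, top, and right of the vertex, respectively. Summing this identity over all vertices in the sub-rectangle causes the internal edge contributions to telescope, leaving the boundary identity (top exits) $+$ (left entries) $=$ (bottom entries) $+$ (right exits). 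Empty boundary data forces the left entries to vanish; the bottom entries equal $Q_1$; the right exits equal $P_1$; and the top exits equal $T$. This gives $T = Q_1 + P_1$, hence \eqref{eq:u0n}. The main obstacle is checking the vertex-local identity through the nucleation and annihilation configurations, where the total line count is not preserved yet the $t_o + l_i = b_i + r_o$ balance persists; once that is in hand, the rest is bookkeeping.
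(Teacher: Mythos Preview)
Your proposal is correct and follows essentially the same approach as the paper: both arguments obtain \eqref{eq:u0m} and \eqref{eq:umn} directly from the mod~2 erasure property, and for \eqref{eq:u0n} both split the top segment at $Nmx+\tfrac{1}{2}$, observe that only color $-1$ lines occupy the left portion, and then apply the vertex-local conservation law $t_o + l_i = b_i + r_o$ on the sub-rectangle $[1,Nmx]\times[Nmy+1,Nny]$ to identify the left contribution (your $T$, the paper's $Y_1$) with $Q_1+P_1$. Your write-up is slightly more explicit in justifying why no color $-2$ line reaches the left strip and in verifying the conservation identity configuration by configuration, but the substance is identical.
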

	\begin{proof}
		Recall that we obtain the two-colored CS6V model in $[\frac{1}{2}, Nnx+\frac{1}{2}] \times [\frac{1}{2}, Nny+\frac{1}{2}]$ by replacing the colors $\{-n, \dots, -m-1\}$ with the color $-2$, replacing the colors $\{-m, \dots, -1\}$ with the color $-1$, and erasing pairs of lines with the same color. The erasure corresponds to the mod 2 erasure procedure in \eqref{eq:Xmn}. Hence, 
		equations \eqref{eq:u0m} and \eqref{eq:umn} directly follow from \eqref{eq:Xmn}.

		We proceed to prove \eqref{eq:u0n}. Note that $X_{0, n}$ is the number of single vertical lines that cross the segment  $[\frac{1}{2}, Nnx+\frac{1}{2}] \times \{Nny + \frac{1}{2}\}$ in the two-colored CS6V model. We decompose
		\begin{equation}\label{eq:Xdecomp}
		X_{0, n} = Y_1 + R,
		\end{equation}
		where $Y_1$
		equals the number of vertical lines with the color $-1$ that leave $[\frac{1}{2}, Nmx + \frac{1}{2}] \times \{Nny+\frac{1}{2}\}$. 
		We turn to the rectangle $[\frac{1}{2}, Nmx + \frac{1}{2}] \times [Nmy + \frac{1}{2}, Nny+\frac{1}{2}]$, in which there are only lines with color $-1$. The CS6V model is conservative in the sense that the number of lines on the right and bottom of a vertex equals the number of lines on the top and left of that vertex. Therefore, the number of lines that cross the bottom and right boundaries of the rectangle $[\frac{1}{2}, Nmx+\frac{1}{2}] \times [Nmy + \frac{1}{2}, Nny + \frac{1}{2}]$ is equal to the number of lines that cross the top and left boundaries, hence,
		\begin{equation}\label{eq:y1}
		Y_1 = Q_1 + P_1.    
		\end{equation}
	    Using this together with \eqref{eq:Xdecomp} and \eqref{eq:y1}, we conclude \eqref{eq:u0n}.
	\end{proof}
	
	\begin{prop}\label{prop:item1}
		We have	$X_{0, 0} = 0$ and $X_{0, n} \geq X_{0, m} + X_{m, n}$ for $0 \leq m \leq n$. Hence, $\{X_{m, n}, m \leq n \in \Z_{\geq 0}\}$ satisfies \ref{item:subadditive} of Theorem \ref{thm:liggettergodic}.
	\end{prop}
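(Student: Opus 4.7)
The equality $X_{0,0}=0$ is immediate from the definition in \eqref{eq:Xmn}, as the sum there runs over an empty index set. The substance of the claim is the superadditivity $X_{0,n} \ge X_{0,m} + X_{m,n}$, which I would obtain by combining Lemma~\ref{lem:someequality} with the monotonicity of the height function (Proposition~\ref{prop:monotonicity}). Substituting the three identities of Lemma~\ref{lem:someequality} and cancelling $Q_1$, the desired inequality reduces to
\begin{equation*}
P_1 + R \;\ge\; Q_2,
\end{equation*}
which is a statement about line counts on the boundary of the square $B := [Nmx+\tfrac12,Nnx+\tfrac12]\times[Nmy+\tfrac12,Nny+\tfrac12]$.

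The plan is to recognize $B$ as a direct instance of the hypothesis of Proposition~\ref{prop:monotonicity}. Using the coloring rule from Section~\ref{sec:subadditivity}, every vertex inside $B$ carries a color in $\{-n,\dots,-(m+1)\}$, whereas every vertex in the complementary L-shaped region $\{a \le Nmx\}\cup\{b\le Nmy\}$ (which lies to the left of and below $B$) carries a color in $\{-m,\dots,-1\}$. Apply the mod 2 erasure of Proposition~\ref{prop:mod2erasure} with the partition grouping $\{-n,\dots,-(m+1)\}$ into one color and $\{-m,\dots,-1\}$ into another; the result is a two-colored CS6V configuration on $B$ in which the vertices inside $B$ carry the higher-priority (``first'') color, while the only lines that can enter $B$ from its left and bottom boundaries carry the lower-priority (``second'') color. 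This is exactly the setup of Proposition~\ref{prop:monotonicity}.

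It remains to identify $H^1(Nnx,Nny)$ and $H^2(Nnx,Nny)$ for this box with the quantities in the target inequality. Since no first-color lines enter $B$ from the left or bottom, $H^1$ coincides with the number of first-color lines exiting the top of $B$, which is $Q_2$. For $H^2$, the 2-fold projection preserves the $P_1$ horizontal second-color lines on the left boundary (there are no first-color horizontal lines with which they could pair), while on the top boundary the mod 2 erasure removes exactly the pairs of vertical lines traveling together and leaves the singletons, which by definition total $R$. Hence $H^2 = P_1 + R$, and Proposition~\ref{prop:monotonicity} gives $Q_2 = H^1 \le H^2 = P_1 + R$, as desired.

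The main obstacle I anticipate is not any deep estimate but the careful bookkeeping in the last two steps: (i) verifying from the coloring rule that no lines of color $\le -(m+1)$ can enter $B$ across its left or bottom boundary (so that the input lines are genuinely all ``second color''), and (ii) checking that the boundary counts after the 2-fold projection coincide exactly with $P_1$ on the left and $R$ on the top, rather than differing by erased pairs. Both points follow from Propositions~\ref{prop:colorignore} and \ref{prop:mod2erasure} together with the structure of the coloring, but they warrant explicit justification.
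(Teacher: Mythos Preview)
Your proposal is correct and follows essentially the same route as the paper: reduce via Lemma~\ref{lem:someequality} to $P_1 + R \ge Q_2$, then apply Proposition~\ref{prop:monotonicity} to the box $B$ after the two-color projection, identifying $H^1 = Q_2$ and $H^2 = P_1 + R$. The paper's proof is terser and leaves the identifications $H^1 = Q_2$, $H^2 = P_1 + R$ implicit, whereas you spell them out (and note that the two-colored setup on $B$ is already constructed in the paragraph preceding Lemma~\ref{lem:someequality}, so you need not rebuild it).
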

	\begin{proof}
		By definition, we have $X_{0, 0} = 0$. We proceed to show that $X_{0, n} \geq X_{0, m} + X_{m, n}$. By Lemma \ref{lem:someequality}, this is equivalent to $R + P_1 \geq Q_2$. This follows from applying Proposition \ref{prop:monotonicity} to the rectangle $[Nmx+\frac{1}{2}, Nnx+\frac{1}{2}] \times [Nmy+\frac{1}{2}, Nny+\frac{1}{2}]$, where only lower priority lines enter from the bottom-left boundary.  
	\end{proof}
	
	\begin{prop}\label{prop:onedirection}
		Fix arbitrary $x, y \in \mathbb{Q}_{\geq 0}$, there exists a constant $\widetilde{g}(x, y) \in \mathbb{R}_{\geq 0}$ such that almost surely, 
		\begin{equation*}
		\lim_{n \to \infty} \frac{H(\lfloor nx \rfloor, \lfloor ny \rfloor)}{n} = \widetilde{g}(x, y).
		\end{equation*}
	\end{prop}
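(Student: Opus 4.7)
The plan is to apply Liggett's superadditive ergodic theorem (Theorem \ref{thm:liggettergodic}) to the family $\{X_{m,n}\}$ defined in \eqref{eq:Xmn}, and then to bridge from the resulting subsequential almost sure limit to the full sequence evaluated at $(\lfloor mx\rfloor, \lfloor my\rfloor)$. The case $xy = 0$ is handled trivially: because the CS6V model has empty boundary data, $H(0,\cdot) \equiv 0$ and $H(\cdot,0) \equiv 0$, and we may take $\widetilde{g}(x,y) := 0$. So fix $x, y \in \mathbb{Q}_{>0}$, let $N = N(x,y)$ be as in \eqref{eq:defN}, and consider the associated colored CS6V model.

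Propositions \ref{prop:item24} and \ref{prop:item1} establish hypotheses \ref{item:subadditive}--\ref{item:expectation} of Theorem \ref{thm:liggettergodic}, so there exists $\gamma \in [0, \infty]$ with $X_{0,n}/n \to \gamma$ almost surely. The mod-$2$ erasure coupling (Propositions \ref{prop:mod2erasure} and \ref{prop:equalind}) places $X_{0,n}$ and $H(Nnx, Nny)$ on a common probability space as identical random variables. Using $H(x,y) = y - h(x,y) \le y$, we obtain $X_{0,n} \le Nny$ and hence $\gamma \le Ny < \infty$. Setting $\widetilde{g}(x,y) := \gamma/N$ then yields the subsequential almost sure limit
\[
\frac{H(Nnx, Nny)}{Nn} \longrightarrow \widetilde{g}(x,y) \quad \text{as } n \to \infty.
\]

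To promote this to a limit along every $m \in \mathbb{Z}_{\ge 0}$ evaluated at $(\lfloor mx\rfloor, \lfloor my\rfloor)$, write $m = Nk + r$ with $0 \le r < N$. Since $Nx, Ny \in \mathbb{Z}$ by the definition of $N$, we have $\lfloor mx\rfloor = Nkx + \lfloor rx\rfloor$ and $\lfloor my\rfloor = Nky + \lfloor ry\rfloor$, so these points differ from $(Nkx, Nky)$ by at most $Nx$ and $Ny$ in their respective coordinates---a bound independent of $m$. The CS6V height function is $1$-Lipschitz in each argument on the integer lattice (clear from $H = y - h$ and the unit-increment property of the S6V height function $h$), so $|H(\lfloor mx\rfloor, \lfloor my\rfloor) - H(Nkx, Nky)| \le N(x+y)$. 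Dividing by $m$ and using $m/(Nk) \to 1$ as $m \to \infty$ concludes the argument. The one delicate point in this plan is the coupling transfer that identifies $X_{0,n}$ with $H(Nnx, Nny)$ pathwise (rather than just in distribution); once that is in hand, the boundedness of $\gamma$ and the Lipschitz interpolation are routine.
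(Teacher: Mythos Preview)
Your proposal is correct and follows essentially the same route as the paper: handle the degenerate boundary case, invoke Propositions~\ref{prop:item24} and~\ref{prop:item1} to feed $\{X_{m,n}\}$ into Theorem~\ref{thm:liggettergodic}, use Proposition~\ref{prop:equalind} to identify $X_{0,n}$ with $H(Nnx,Nny)$, and finish with the Lipschitz property of $H$ to pass from the subsequence $(Nn)$ to general $m$.

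One comment on the point you flag as delicate. You do not actually need a pathwise identification of $X_{0,n}$ with $H(Nnx,Nny)$; equality in law of the full sequences $\{X_{0,n}\}_{n\ge 0}$ and $\{H(Nnx,Nny)\}_{n\ge 0}$ (which is what Proposition~\ref{prop:equalind} delivers) already suffices. Almost sure convergence of a sequence to a \emph{deterministic} constant is a distributional property of that sequence: the set $\{(a_n)_{n\ge 0}: a_n/n \to \gamma\}$ is a measurable subset of $\mathbb{R}^{\mathbb{Z}_{\ge 0}}$, so once $\mathbb{P}(X_{0,n}/n\to\gamma)=1$ and the two processes share a law, the same event has probability~$1$ for $H(Nnx,Nny)$. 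So there is no gap here. Your explicit verification that $\gamma \le Ny < \infty$ and your detailed Lipschitz interpolation (writing $m = Nk + r$) are welcome additions that the paper leaves implicit.
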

	\begin{proof}
		If $x = 0$ or $y = 0$, we know that $H(\lfloor nx \rfloor, \lfloor ny \rfloor) = 0$ for all $n$, thus $\widetilde{g}(x, y) = 0$. Now we can assume that $x, y \in \mathbb{Q}_{> 0}$. Recall the definition of $X_{m, n}$ from \eqref{eq:Xmn}. By Propositions \ref{prop:item24} and \ref{prop:item1}, $\{X_{m, n}, m \leq n \in \mathbb{Z}_{\geq 0}\}$ satisfy the conditions of the superadditive ergodic theorem. Therefore, there exists a constant $\gamma(x, y)$ such that almost surely,
		$\lim_{n \to \infty} \frac{X_{0, n}}{n} = \gamma(x, y)$. By Proposition \ref{prop:equalind}, we know that almost surely, $\lim_{n \to \infty} \frac{H(nNx, nNy)}{n} = \gamma(x, y)$. Recall that $N$ is the positive integer defined in \eqref{eq:defN}. Since the height function $h: \mathbb{Z}_{\geq 0} \times \mathbb{Z}_{\geq 0} \to \mathbb{Z}_{\geq 0}$ is a Lipschitz function, we conclude that almost surely,
		\begin{equation*}
		\lim_{n \to \infty} \frac{H(\lfloor nx \rfloor, \lfloor ny \rfloor)}{n} = \gamma(x, y)/N := \widetilde{g}(x, y). \qedhere
		\end{equation*}
	\end{proof}
	
	\begin{proof}[Proof of Theorem \ref{thm:maincs6v}]
Using the fact that $\mathbb{Q}_{\geq 0}^2$ is countable and Proposition \ref{prop:onedirection}, almost surely,
		\begin{equation*}
		\lim_{n \to \infty} \frac{H(\lfloor nx \rfloor, \lfloor ny \rfloor)}{n} = \widetilde{g}(x, y)     
		\end{equation*}
		for all $(x, y) \in \mathbb{Q}_{> 0}^2$. By definition, $H$ is a Lipschitz function, hence $\widetilde{g}: \mathbb{Q}_{\geq 0}^2 \to \mathbb{R}$ is also a Lipschitz function. We can thus extend $\widetilde{g}$ to be a Lipschitz function on $\mathbb{R}_{\geq 0}^2$. By the density of $\mathbb{Q}_{\geq 0}^2$ in $\mathbb{R}_{\geq 0}^2$ and the Lipschitz property, we have that almost surely,
	    \begin{equation*}
		\lim_{n \to \infty} \frac{H(\lfloor nx \rfloor, \lfloor ny\rfloor)}{n} = \widetilde{g}(x, y)
		\end{equation*}
		for all $(x, y) \in \mathbb{R}_{\geq 0}^2$. Taking $g(x, y) = y - \widetilde{g}(x, y)$, we conclude the proof.
	\end{proof}
	
	\appendix
		\section{The Boolean-type product for the $t$-PNG model}\label{sec:Boolean-type product}
	Taking $b_1 = t$ and $b_2 = 1$, the set $W$ in \eqref{eq:setW} reduces to $\{0, t, 1-t, 1\}$. The Boolean-type product reduces to the following: for $a, b \in \{0, t, 1-t, 1\}$,
	\begin{equation*}
	a * b := 
	\begin{cases}
	0 &\qquad \text{ if } \{a, b\} = \{t, 1-t\},\\
	a & \qquad \text{ if } a = b,\\
	ab & \qquad \text{ else.} 
	\end{cases}
	\end{equation*}
Recall from \cite[Definition 1.7]{drillick2022hydrodynamics} that 
the $\LL^n$-matrix of the $t$-PNG model is defined as 
	\begin{equation*}
	\LL^n(\bi, \bj; \bk, \bl) := \mmin_{r \in \{1, \dots, n\}} \Big(\LL^1 (\mfs_r (\bi), \mfs_r (\bj); \mfs_r (\bk), \mfs_r (\bl))\Big),
	\end{equation*}
	where $\mmin$ is a modified version of the minimum. For $x_1, \dots, x_n \in \{0, t, 1-t, 1\}$,  
	\begin{equation*} 
	\mmin\big(x_1, \dots, x_n\big) := 
	\begin{cases}
	0 &\qquad \text{if $x_i = t$ and $x_j = 1-t$ for some $i,j \in \{1,\dots,n\}$,}\\
	\min\big(x_1, \dots, x_n\big) &\qquad \text{else.}\\
	\end{cases} 
	\end{equation*}
	It is straightforward to verify that $\mmin(a, b) = a * b$ for $a, b \in \{0, t, 1-t, 1\}$, and thus 
	$
	\mmin(x_1, \dots, x_n) = x_1 * \cdots * x_n.
	$
	Hence, the definition \cite[Definition 1.7]{drillick2022hydrodynamics} is equivalent to Definition \ref{def:nLmatrix} when $b_1 = t$ and $b_2 = 1$.
	\section{Two-colored CS6V configurations}
	\label{sec:twocolored}
	
	In this section, we list the configurations of the two-colored CS6V model. Note that the weights of the vertex configurations are given by the matrix $\LL^2$.
	\begin{figure}[ht]
		\centering
		\begin{tikzpicture}
		\begin{scope}[yshift = 2cm]
		\draw[thick, red] (-0.5, 0) -- (0.5, 0); 
		\node at (0, -0.8) {$1$};
		\end{scope}
		\begin{scope}[yshift = 2cm, xshift = 2cm]
		\draw[thick,red] (0, -0.5) -- (0, 0.5); 
		\node at (0, -0.8) {$1$};
		\end{scope}
		\begin{scope}[yshift = 2cm, xshift = 4cm]
		\draw[thick, red] (0, -0.5) -- (0, 0.5); 
		\draw[thick,red] (-0.5, 0) -- (0.5, 0);
		\node at (0, -0.8) {$b_1$};
		\end{scope}
		
		\begin{scope}[yshift = 2cm, xshift = 6cm]
		\draw[white] (0, -0.5) -- (0, 0.5); 
		\draw[white] (-0.5, 0) -- (0.5, 0);
		\node at (0, -0.8) {$b_2$};
		\end{scope}
		
		\begin{scope}[yshift = 2cm, xshift = 8cm]
		\draw[thick,red] (0, -0.5) -- (0, 0); 
		\draw[thick,red] (-0.5, 0) -- (0, 0);
		\node at (0, -0.8) {$1 - b_1$};
		\end{scope}
		
		\begin{scope}[yshift = 2cm, xshift = 10cm]
		\draw[thick,red] (0, 0) -- (0, 0.5); 
		\draw[thick,red] (0, 0) -- (0.5, 0);
		\node at (0, -0.8) {$1 - b_2$};
		\end{scope}
		
		\begin{scope}[yshift = 2cm, xshift = 12cm]
		\draw[thick,red] (-0.5, 0) -- (0.5, 0);
		\draw[thick,blue] (0, -0.5) -- (0, -0.1);
		\draw[thick,blue] (0, -0.1) -- (0.5, -0.1); 
		\node at (0, -0.8) {$1 - b_1$};
		\end{scope}
		\begin{scope}[yshift = 2cm, xshift = 14cm]
		\draw[thick,red] (-0.5, 0) -- (0.5, 0);
		\draw[thick,blue] (0, -0.5) -- (0, -0.1);
		\draw[thick,blue] (0, -0.5) -- (0, 0.5);
		\node at (0, -0.8) {$b_1$};
		\end{scope}
		\begin{scope}[xshift = 0cm]
		\draw[thick,blue] (-0.5, 0) -- (0.5, 0);
		\draw[thick,red] (-0.5, -0.1) -- (0.5, -0.1);
		\draw[thick,blue] (0, -0.5) -- (0, 0.5);
		\node at (0, -0.8) {$1$};
		\end{scope}
		\begin{scope}[xshift = 2cm]
		\draw[thick,blue] (-0.5, 0) -- (0.5, 0);
		\draw[thick,red] (-0.5, -0.1) -- (0.5, -0.1);
		\draw[thick,red] (0, -0.5) -- (0, 0.5);
		\node at (0, -0.8) {$b_1$};
		\end{scope}
		\begin{scope}[xshift = 4cm]
		
		\draw[thick,red] (0, -0.5) -- (0, -0.1);
		\draw[thick,blue] (-0.5, 0) -- (0, 0);
		\draw[thick,red] (-0.5, -0.1) -- (0, -0.1);
		\draw[thick,blue] (0, 0) -- (0, 0.5);
		\node at (0, -0.8) {$1 - b_1$};
		\end{scope}
		\begin{scope}[xshift = 6cm]
		\draw[thick,blue] (-0.5, 0) -- (0.5, 0);
		\draw[thick,red] (0, -0.5) -- (0, 0.5);
		\node at (0, -0.8) {$b_1$};
		\end{scope}
		\begin{scope}[xshift = 8cm]
		\draw[thick,blue] (-0.5, 0) -- (-0.1, 0);
		\draw[thick,red] (0, -0.5) -- (0, 0.5);
		\draw[thick,blue] (-0.1, 0) -- (-0.1, 0.5);
		\node at (0, -0.8) {$1 - b_1$};
		\end{scope}
		\begin{scope}[xshift = 10cm]
		\draw[thick,blue] (-0.5, 0) -- (0.5, 0);
		\draw[thick,red] (-0.1, -0.5) -- (-0.1, 0.5);
		\draw[blue] (0, -0.5) -- (0, 0.5);
		\node at (0, -0.8) {$1$};
		\end{scope}
		\begin{scope}[xshift = 12cm]
		\draw[thick,red] (-0.5, 0) -- (0.5, 0);
		\draw[thick,red] (-0.1, -0.5) -- (-0.1, 0.5);
		\draw[thick,blue] (0, -0.5) -- (0, 0.5);
		\node at (0, -0.8) {$b_1$};
		\end{scope}
		\begin{scope}[xshift = 14cm]
		\draw[thick,red] (-0.1, -0.5) -- (-0.1, 0) -- (-0.5, 0);
		\draw[thick,blue] (0, -0.5) -- (0, 0);
		\draw[thick,blue] (0, 0) -- (0.5, 0);
		\node at (0, -0.8) {$1 - b_1$};
		\end{scope}
		
		\begin{scope}[yshift = -2cm, xshift = 0cm]
		\draw[thick, red] (-0.5, 0) -- (0.5, 0);
		\draw[thick, blue] (-0.5, -0.1) -- (0.5, -0.1);
		\node at (0, -0.8) {$b_2$};
		\end{scope}
		\begin{scope}[yshift = -2cm, xshift = 2cm]
		\draw[thick,blue] (-0.5, 0) -- (0, 0) -- (0, 0.5);
		\draw[thick,red] (-0.5, -0.1) -- (0.5, -0.1);
		\node at (0, -0.8) {$1- b_2$};
		\end{scope}
		\begin{scope}[yshift = -2cm, xshift = 4cm]
		\draw[thick, blue] (-0.5, -0.1) -- (0.5, -0.1);
		\node at (0, -0.8) {$b_2$};
		\end{scope}
		\begin{scope}[yshift = -2cm, xshift = 6cm]
		\draw[thick, red] (0.5, -0.1) -- (0.1, -0.1) -- (0.1, 0.5);
		\draw[thick, blue] (-0.5, -0.1) -- (0, -0.1) -- (0, 0.5);
		\node at (0, -0.8) {$1- b_2$};
		\end{scope}
		\begin{scope}[yshift = -2cm, xshift = 8cm]
		\draw[thick, red] (-0.1, -0.5) -- (-0.1, 0.5);
		\draw[thick, blue] (0, -0.5) -- (0, 0.5);
		\node at (0, -0.8) {$b_2$};
		\end{scope}
		\begin{scope}[yshift = -2cm, xshift = 10cm]
		\draw[thick, red] (-0.1, -0.5) -- (-0.1, 0.5);
		\draw[thick, blue] (0, -0.5) -- (0, -0.1) -- (0.5, -0.1);
		\node at (0, -0.8) {$1-b_2$};
		\end{scope}
		\begin{scope}[yshift = -2cm, xshift = 12cm]
		\draw[thick, blue] (-0.1, -0.5) -- (-0.1, 0.5);
		\node at (0, -0.8) {$b_2$};
		\end{scope}
		\begin{scope}[yshift = -2cm, xshift = 14cm]
		\draw[thick, blue] (-0.1, -0.5) -- (-0.1, 0) -- (0.5, 0);
		\draw[thick, red] (-0.1, 0.5) -- (-0.1, 0.1) -- (0.5, 0.1);
		\node at (0, -0.8) {$1 - b_2$};
		\end{scope}
		\begin{scope}[yshift = -4cm, xshift = 0cm]
		\draw[thick, blue] (-0.1, -0.5) -- (-0.1, 0.5);
		\draw[thick, blue] (-0.5, -0.1) -- (0, -0.1) -- (0.5, -0.1);
		\node at (0, -0.8) {$b_1 b_2$};
		\end{scope}
		\begin{scope}[yshift = -4cm, xshift = 2cm]
		\draw[thick, blue] (-0, -0.5) -- (-0, 0.5);
		\draw[thick, blue] (-0.5, -0.1) -- (0, -0.1) -- (0.5, -0.1);
		\draw[thick, red] (0.5, 0) -- (0.1, 0) -- (0.1, 0.5);
		\node at (0, -0.8) {$(1-b_1)(1 - b_2)$};
		\end{scope}
		\begin{scope}[yshift = -4cm, xshift = 4cm]
		\draw[thick, blue] (0, -0.5) -- (0, 0) -- (-0.5, 0);
		\draw[thick, red] (0, 0.5) -- (0, 0) -- (0.5, 0);
		\node at (0, -0.8) {$b_1(1 - b_2)$};
		\end{scope}
		\begin{scope}[yshift = -4cm, xshift = 6cm]
		\draw[thick, blue] (-0, -0.5) -- (-0, 0);
		\draw[thick, blue] (-0.5, 0) -- (0, 0);
		\node at (0, -0.8) {$(1 - b_1)b_2$};
		\end{scope}
		\begin{scope}[yshift = -4cm, xshift = 8cm]
		\draw[thick,red] (-0.5, -0.1) -- (0.5, -0.1);
		\draw[thick,red] (-0.1, -0.5) -- (-0.1, 0.5);
		\draw[thick,blue] (0, -0.5) -- (0, 0.5);
		\draw[thick,blue] (-0.5, 0) -- (0.5, 0);
		\node at (0, -0.8) {$b_1 b_2$};
		\end{scope}
		\begin{scope}[yshift = -4cm, xshift = 10cm]
		\draw[thick,red] (-0.1, -0.5) -- (-0.1, -0.1);
		\draw[thick,red] (0, -0.5) -- (0, 0.1) -- (-0.5, 0.1);
		\draw[thick,blue] (-0.1, -0.5) -- (-0.1, 0);
		\draw[thick,blue] (-0.5, 0) -- (-0.1, 0);
		\node at (0, -0.8) {$(1-b_1)b_2$};
		\end{scope}
		\begin{scope}[yshift = -4cm, xshift = 12cm]
		\draw[thick,blue] (-0.1, -0.5) -- (-0.1, 0);
		\draw[thick,red] (-0.5, 0.1) -- (0.5, 0.1);
		\draw[thick,red] (0, -0.5) -- (0, 0.5);
		\draw[thick,blue] (-0.5, 0) -- (-0.1, 0);
		\node at (0, -0.8) {$(1-b_2) b_1$};
		\end{scope}
		\begin{scope}[yshift = -4cm, xshift = 14cm]
		\draw[thick,blue] (0, -0.5) -- (0, 0.5);
		\draw[thick,red] (-0.1, -0.5) -- (-0.1, -.1) -- (-0.5, -.1);
		\draw[thick,blue] (-0.5, 0) -- (0.5, 0);
		\node at (0, -0.8) {$(1-b_1)(1-b_2)$};
		\end{scope}
		\end{tikzpicture}
		\caption{The two-colored CS6V configurations with non-zero weight given by $\mathsf{L}^2$. Red has a higher priority than blue. }
		\label{fig:twocolor}
	\end{figure}
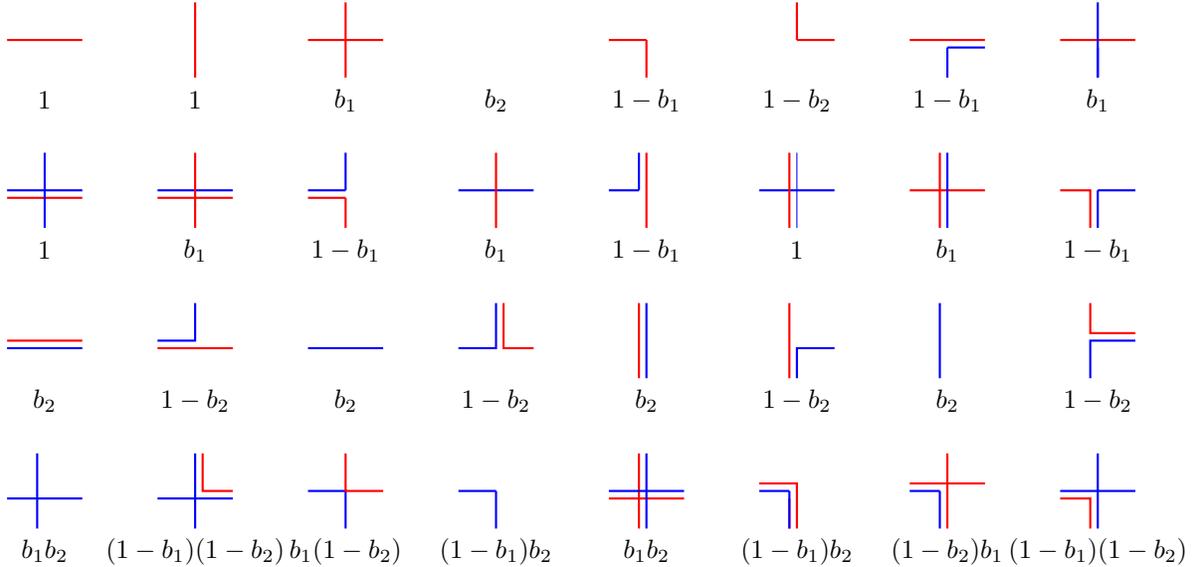

	\bibliographystyle{alpha}
	\bibliography{refs.bib}
\end{document}